 \newtheorem{thm}{Theorem}[section]
 \newtheorem{cor}[thm]{Corollary}
 \newtheorem{lem}[thm]{Lemma}
 \newtheorem{prop}[thm]{Proposition}
 \theoremstyle{definition}
 \newtheorem{defn}[thm]{Definition}
 \theoremstyle{remark}
 \newtheorem{rem}[thm]{Remark}
 \numberwithin{equation}{section}
 \newcommand{\Spec}{\operatorname{Spec}}
 \newcommand{\Res}{\operatorname{Res}}
 \newcommand{\Hom}{\operatorname{Hom}}
 \newcommand{\Rat}{\operatorname{Rat}}
 \newcommand{\SL}{\operatorname{SL}}
 \newcommand{\PGL}{\operatorname{PGL}}
 \newcommand{\Stab}{\operatorname{Stab}}
 \newcommand{\Sym}{\operatorname{Sym}}
\newlength{\defbaselineskip}
\begin{document}

\title{The Semistable Reduction Problem for the Space of Morphisms on $\mathbb{P}^{n}$}
\author{Alon Levy}

\maketitle

\section{Introduction and the Statement of the Problem}

\noindent The moduli spaces of dynamical systems on $\mathbb{P}^{n}$ are the spaces of morphisms, and more in general rational maps, defined by polynomials of
degree $d$; the case of interest is $d > 1$, in which case those rational maps are not automorphisms. For each $n$ and $d$, we write each rational map $\varphi$ as
$(\varphi_{0}:\ldots:\varphi_{n})$, so that the space is parametrized by the monomials of each $\varphi_{i}$ and is naturally isomorphic to a large projective
space, $\mathbb{P}^{N}$. By an elementary computation, $N = (n+1){n+d \choose d} - 1$. As we will not consider more than one of these moduli spaces at a time, there
is no ambiguity in writing just $N$, without explicit dependence on $n$ and $d$.

\medskip Within the space of rational maps, the space of morphisms is an affine open subvariety, denoted $\Hom_{d}^{n}$. The group $\PGL(n+1)$ acts on
$\mathbb{P}^{N}$ by conjugation, corresponding to coordinate change, i.e. $A$ maps $\varphi$ to $A\varphi A^{-1}$; this action preserves $\Hom_{d}^{n}$, since the
property of being a morphism is independent of coordinate change.

\medskip We study the quotient of the action using geometric invariant theory \cite{GIT}. To do this, we need to replace $\PGL(n+1)$ with $\SL(n+1)$, which projects
onto $\PGL(n+1)$ finite-to-one. Geometric invariant theory defines stable and semistable loci for the $\SL(n+1)$-action. To take the quotient, we need to remove the
unstable locus, defined as the complement of the semistable locus. The quotient of $\Hom_{d}^{n}$ by $\SL(n+1)$ is denoted $\mathrm{M}_{d}^{n}$, and parametrizes
morphisms on $\mathbb{P}^{n}$ up to coordinate change. The stable and semistable loci for the action of $\SL(n+1)$ on $\mathbb{P}^{N}$ are denoted by $\Hom_{d}^{n,
s}$ and $\Hom_{d}^{n, ss}$, and their quotients are denoted by $\mathrm{M}_{d}^{n, s}$ and $\mathrm{M}_{d}^{n, ss}$.

\medskip It is a fact that every regular map is in the stable locus. More precisely, we have the following prior results, due to \cite{Sil96}, \cite{PST}, and
\cite{Lev}:

\begin{thm}\label{containment}$\Hom_{d}^{n, s}$ and $\Hom_{d}^{n, ss}$ are open subvarieties of $\mathbb{P}^{N}$ such that $\Hom_{d}^{n} \subsetneq \Hom_{d}^{n, s}
\subseteq \Hom_{d}^{n, ss} \subsetneq \mathbb{P}^{N}$. The middle containment is an equality if and only if $n = 1$ and $d$ is even.\end{thm}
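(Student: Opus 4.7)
The plan is to reduce each containment to the Hilbert--Mumford numerical criterion. Openness of $\Hom_d^{n,s}$ and $\Hom_d^{n,ss}$ in $\mathbb{P}^N$ is a general GIT result for linearized reductive actions, so I take it for granted. To each nontrivial $1$-parameter subgroup $\lambda\colon\mathbb{G}_m\to\SL(n+1)$, diagonalized (after $\SL$-conjugation) with integer weights $r_0\geq\cdots\geq r_n$ summing to zero, the criterion attaches
\[
\mu(\varphi,\lambda)=\min\bigl\{\,r_i-\sum_j I_j r_j\ :\ \text{coefficient of }x^I\text{ in }\varphi_i\text{ is nonzero}\,\bigr\},
\]
and $\varphi$ is stable (resp.\ semistable) if and only if $\mu(\varphi,\lambda)<0$ (resp.\ $\leq 0$) for every such $\lambda$.

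The inclusion $\Hom_d^n\subseteq\Hom_d^{n,s}$ is then the combinatorial statement that whenever the $\varphi_i$ share no common zero, for every admissible weight sequence some $(i,I)$ forces $r_i-\sum_j I_j r_j<0$; this is exactly what \cite{Sil96} establishes for $n=1$ and \cite{PST}, \cite{Lev} for general $n$, and I would import it directly. Strictness $\Hom_d^n\subsetneq\Hom_d^{n,s}$ is then verified by producing one non-morphism still in the stable locus: the simplest candidates have a low-dimensional common-zero locus against which no $1$-PS can concentrate enough weight to violate $\mu<0$, and such explicit $\varphi$ are already present in the cited works. For $\Hom_d^{n,ss}\subsetneq\mathbb{P}^N$, any point of the form $\varphi=(x_1^d:0:\cdots:0)$ is unstable, since its sole nonzero coordinate has $\lambda$-weight $r_0-d r_1=n+d>0$ under the $1$-PS of weights $(n,-1,\ldots,-1)$.

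The equivalence ``$n=1$ and $d$ even $\iff$ stable $=$ semistable'' is the subtlest point. For the forward direction, every nontrivial $1$-PS of $\SL(2)$ is $\SL(2)$-conjugate to $\lambda_0\colon t\mapsto\mathrm{diag}(t,t^{-1})$, and since $\mu$ is invariant under conjugation of $\lambda$ it suffices to test $\lambda_0$; on $\lambda_0$ the admissible expression equals $\pm 1-d+2I_1$, an integer of the same parity as $d+1$, so when $d$ is even $\mu(\varphi,\lambda_0)$ is odd for every $\varphi\in\mathbb{P}^N$ and $\mu\leq 0$ already forces $\mu<0$. Conversely, when $(n,d)\neq(1,\text{even})$ the parity obstruction disappears: for $n=1$ with $d$ odd the expression takes even values and one writes down explicit $\varphi$ with $\mu(\varphi,\lambda_0)=0$ directly; for $n\geq 2$ the larger space of weight vectors permits balancing $\mu=0$ on some diagonal $1$-PS while keeping $\mu\leq 0$ on every other.

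I expect the main technical obstacle to be the combinatorial step $\Hom_d^n\subseteq\Hom_d^{n,s}$, which is precisely the content of the cited prior work and so is imported rather than redone; the remaining pieces are short computations or single examples. The only step that really requires care is the parity/obstruction analysis distinguishing the one case $n=1$, $d$ even from all others, which is why that case singles itself out in the theorem.
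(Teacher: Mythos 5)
The paper does not actually prove Theorem~\ref{containment}: it is stated as a prior result imported from \cite{Sil96}, \cite{PST}, and \cite{Lev}, and the only related material in the paper is the recap of the Hilbert--Mumford criterion and of Silverman's explicit (semi)stability description in Section~\ref{GITrecap}. Your sketch follows exactly that framework, and the pieces you actually carry out check out against the paper's conventions: your $\mu$ matches the weight $a_i-\mathbf{a}\cdot\mathbf{d}$ convention, the point $(x_1^d:0:\cdots:0)$ is indeed destabilized by the $1$-PS of weights $(n,-1,\ldots,-1)$, and the parity argument ($\pm1-d+2I_1$ odd when $d$ is even) is precisely Silverman's reason that stable $=$ semistable for $n=1$, $d$ even. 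The one loose end is the converse for $n=1$, $d$ odd: exhibiting $\varphi$ with $\mu(\varphi,\lambda_0)=0$ only shows $\varphi$ is not stable, and you must also verify it is semistable against every other $1$-PS (you note this requirement for $n\geq 2$ but elide it here); a concrete witness is $\varphi=(x^{(d+1)/2}y^{(d-1)/2}:x^{(d-1)/2}y^{(d+1)/2})$, whose bad points have multiplicity exactly $(d-1)/2$ and which is therefore semistable but not stable by the criterion recalled in Theorem~2.8 of the paper. With that filled in, your route is the standard one and coincides with the cited sources the paper relies on.
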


\begin{thm}\label{finite}The stabilizer group in $\PGL(n+1)$ of each element of $\Hom_{d}^{n}$ is finite and bounded in terms of $d$ and $n$.\end{thm}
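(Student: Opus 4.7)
The plan is to exploit the commutation relation $A\varphi=\varphi A$ satisfied by any $A\in\Stab(\varphi)$, which forces $A$ to permute the fixed-point scheme $\Fix(\varphi^m)$ for every $m\ge 1$. A Lefschetz/B\'ezout count gives
\[
 \ell\bigl(\Fix(\varphi^m)\bigr)=\frac{d^{m(n+1)}-1}{d^m-1},
\]
so $\Stab(\varphi)$ acts on a zero-dimensional scheme whose length grows exponentially in $m$.

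For \emph{finiteness}, I would argue that the identity component $\Stab(\varphi)^0$ is trivial. Otherwise, pick a one-parameter subgroup $\lambda\colon\mathbb{G}_m\to\PGL(n+1)$ (torus case) or $\lambda\colon\mathbb{G}_a\to\PGL(n+1)$ (unipotent case) lying inside $\Stab(\varphi)^0$. After conjugating so that $\lambda$ is in standard form---diagonal $\mathrm{diag}(t^{a_0},\dots,t^{a_n})$, or a single Jordan block---expanding the identity $\lambda(t)\circ\varphi=\varphi\circ\lambda(t)$ as a polynomial identity in $t$ and in the homogeneous coordinates forces each $\varphi_i$ to be a $\lambda$-semi-invariant of a prescribed weight. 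A short case analysis then shows that the $\varphi_i$ must share a common zero at some fixed point of $\lambda$ (using $d>1$ to rule out the constant-weight case), contradicting $\varphi\in\Hom_d^n$.

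For the \emph{uniform bound} on $|\Stab(\varphi)|$, I would fix an $m=m(n,d)$ such that the reduced support $|\Fix(\varphi^m)|$ contains $n+2$ points in general linear position (no $n+1$ on a hyperplane). Since an element of $\PGL(n+1)$ is determined by its effect on any such configuration, the restriction map $\Stab(\varphi)\to\Sym\bigl(|\Fix(\varphi^m)|\bigr)$ is injective, yielding $|\Stab(\varphi)|\le N_m!$ with $N_m=(d^{m(n+1)}-1)/(d^m-1)$. The main obstacle is selecting $m$ uniformly in $\varphi$: for each individual morphism this follows from Zariski density of periodic points of a polarized endomorphism, but to obtain a uniform $m$ I would observe that the loci $U_m=\{\varphi:|\Fix(\varphi^m)|\text{ contains }n+2\text{ general-position points}\}$ are open in $\Hom_d^n$, form an ascending chain $U_m\subseteq U_{mk}$, and cover $\Hom_d^n$; Noetherian quasi-compactness then yields a single $m=m(n,d)$ that works for every $\varphi\in\Hom_d^n$.
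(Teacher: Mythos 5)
First, a point of comparison: the paper does not prove Theorem~\ref{finite} at all --- it is quoted as a prior result of \cite{Sil96}, \cite{PST}, and \cite{Lev} --- so there is no internal proof to measure your argument against. Judged on its own, your outline runs on the same mechanism as the cited proofs (the stabilizer permutes the periodic points, and $n+2$ periodic points in general position rigidify $\PGL(n+1)$), and the skeleton is viable, but two steps are genuinely incomplete as written. The unipotent half of your finiteness argument does not work as described: a copy of $\mathbb{G}_a$ inside $\Stab(\varphi)^{0}$ admits no weight decomposition, so ``each $\varphi_i$ is a $\lambda$-semi-invariant of prescribed weight'' has no meaning there, and in characteristic $p$ a homomorphism $\mathbb{G}_a \to \PGL(n+1)$ need not even have the form $t \mapsto \exp(tN)$ for a single nilpotent $N$. (The torus case is fine: if every nonzero monomial $\mathbf{x^{d}}$ of $\varphi_i$ satisfies $\mathbf{a}\cdot\mathbf{d} - a_i = w$, then evaluating at $(1:0:\ldots:0)$ and $(0:\ldots:0:1)$ --- where some $\varphi_i$, resp.\ $\varphi_j$, must be nonzero since $\varphi$ is a morphism --- gives $d(a_0 - a_n) = a_i - a_j \le a_0 - a_n$, forcing all $a_i$ equal because $d > 1$.) The cleanest repair is to discard the one-parameter-subgroup analysis entirely: a connected group permuting the finite set $|\Fix(\varphi^{m})|$ fixes it pointwise, so once you have $n+2$ periodic points in general position --- the very input you need for the bound --- the triviality of $\Stab(\varphi)^{0}$ comes for free.

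Second, the uniformity step rests on two claims you assert but do not establish. The existence, for each individual $\varphi$, of $n+2$ periodic points in general position is Fakhruddin's density theorem over an arbitrary algebraically closed field; that is a legitimate but substantial citation, not a routine fact (over $\mathbb{C}$ one can substitute the density of repelling cycles). And the openness of $U_m$ is not automatic: the natural incidence locus is only locally closed, and the image of a constructible set under a proper projection is merely constructible. Openness does hold, but the argument you need is that $\mathcal{F}_m = \{(\varphi, P) : P \in \Fix(\varphi^{m})\}$ is finite of constant fiber length $N_m$ over the reduced base $\Hom_{d}^{n}$, hence finite flat, so the image of the open locus ``$n+2$ pairwise distinct points in general position'' inside the $(n+2)$-fold fiber power of $\mathcal{F}_m$ is open. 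Finally, the $U_m$ as indexed do not form a chain ($U_2$ and $U_3$ need not be comparable); replace them by the nested family $U_{m!}$, or invoke directedness of the cover, before applying quasi-compactness. With those repairs the injection $\Stab(\varphi) \hookrightarrow \Sym(|\Fix(\varphi^{m})|)$ and the bound $|\Stab(\varphi)| \le N_m!$ go through.
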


\medskip $\mathrm{M}_{d}^{n, ss}$ is a proper variety, as it is the quotient of the largest semistable subspace of $\mathbb{P}^{N}$ for the action of $\SL(n+1)$.
We make the following simplifying,

\begin{defn}\label{ss}A rational map $\varphi \in \mathbb{P}^{N}$ is called semistable if it is in the semistable space $\Hom_{d}^{n, ss}$.\end{defn}

\medskip The semistable reduction theorem states the following, answering in the affirmative a conjecture for $\mathbb{P}^{1}$ in~\cite{STW}:

\begin{thm}\label{SSR1}If $C$ is a complete curve with $K(C)$ its function field, and if $\varphi_{K(C)}$ is a semistable rational map on $\mathbb{P}^{n}_{K(C)}$,
then there exists a curve $D$ mapping finite-to-one onto $C$ with a $\mathbb{P}^{n}$-bundle $\mathbf{P}(\mathcal{E})$ on $D$ with a self-map $\Phi$ such that,

\begin{enumerate}
  \item The restriction of $\Phi$ to the fiber of each $x \in D$, $\varphi_{x}$, is a semistable rational self-map.
  \item $\Phi$ is a semistable map over $K(D)$, and is equivalent to $\varphi_{K(D)}$ under coordinate change.
\end{enumerate}\end{thm}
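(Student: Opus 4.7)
The plan is to use the properness of the GIT quotient $\mathrm{M}_{d}^{n, ss}$ to extend the moduli map induced by $\varphi_{K(C)}$, and then to reconstruct a family of morphisms by twisting local lifts after a finite cover.

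First, the semistable map $\varphi_{K(C)}$ determines a $K(C)$-point of $\mathrm{M}_{d}^{n, ss}$, equivalently a rational map $f : C \dashrightarrow \mathrm{M}_{d}^{n, ss}$. Because $C$ is a smooth complete curve and $\mathrm{M}_{d}^{n, ss}$ is proper, the valuative criterion uniquely extends $f$ to a morphism $f : C \to \mathrm{M}_{d}^{n, ss}$; in particular, $f(c)$ is represented by a semistable morphism in $\Hom_{d}^{n, ss}$ for every closed point $c \in C$.

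Second, the quotient $q : \Hom_{d}^{n, ss} \to \mathrm{M}_{d}^{n, ss}$ is a good quotient admitting \'etale local sections, and is a $\PGL(n+1)$-torsor away from the locus of non-trivial stabilizers. Pulling back local sections along $f$ yields an \'etale cover $\{U_{i} \to C\}$ together with local lifts $\tilde{f}_{i} : U_{i} \to \Hom_{d}^{n, ss}$ whose pairwise differences form a \v{C}ech $1$-cocycle with values in $\PGL(n+1)$, well-defined modulo the finite stabilizer groups bounded by Theorem \ref{finite}. Passing to a finite cover $D \to C$ over which the stabilizer ambiguity becomes trivial produces an honest $\PGL(n+1)$-cocycle on $D$, equivalently a $\mathbb{P}^{n}$-bundle $\mathbf{P}(\mathcal{E}) \to D$; the local maps $\tilde{f}_{i}$ then descend along this cocycle to a global self-morphism $\Phi$ of $\mathbf{P}(\mathcal{E})$.

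The conclusions then follow: at any closed point $x \in D$, choosing a trivialization of $\mathbf{P}(\mathcal{E})_{x}$ identifies $\varphi_{x}$ with a preimage of $f(x)$ under $q$, so $\varphi_{x} \in \Hom_{d}^{n, ss}$; at the generic point, $\Phi_{K(D)}$ and $\varphi_{K(D)}$ map to the same $K(D)$-point of $\mathrm{M}_{d}^{n, ss}$, hence are $\PGL(n+1)$-conjugate over $K(D)$. The main obstacle is the construction in the second step: since $\mathrm{M}_{d}^{n, ss}$ is only a coarse moduli space, $f$ need not globally lift to $\mathbb{P}^{N}$, so both passing to a non-trivial $\mathbb{P}^{n}$-bundle and taking a finite cover $D \to C$ are genuinely necessary, and verifying that the finite-stabilizer ambiguity is really killed by a sufficiently ramified finite cover is the delicate heart of the argument.
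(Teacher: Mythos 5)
Your strategy --- extend the induced map $f\colon C\to \mathrm{M}_{d}^{n,ss}$ by properness of the GIT quotient, then lift it back upstairs via local sections glued by a $\PGL(n+1)$-cocycle after a finite cover --- is genuinely different from the paper's. The paper (in the equivalent local formulation of Theorem~\ref{SSR2}) never takes sections: it passes to the preimage $\pi^{-1}(C)\subseteq X^{ss}$, observes that it is connected (a union of connected orbits whose closures intersect, over a connected base) and surjects onto $C$, and extracts from it an integral curve mapping finite-to-one onto $C$; that curve is the desired integral model and no cocycle gluing is required. The difference is not merely stylistic, because your second step has a genuine gap exactly where semistable reduction has content. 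The map $q\colon \Hom_{d}^{n,ss}\to \mathrm{M}_{d}^{n,ss}$ is only a good categorical quotient over the strictly semistable locus: a fiber of $q$ there contains several distinct orbits whose closures meet, two local lifts $\tilde f_{i},\tilde f_{j}$ through such a point need not lie in the same orbit, and so the transition element $g_{ij}\in\PGL(n+1)$ is simply undefined there --- it is not merely ambiguous up to a finite stabilizer. Since Theorem~\ref{SSR1} only assumes $\varphi_{K(C)}$ semistable, and since even a stable generic fiber will in general degenerate to strictly semistable points over finitely many $c\in C$, your construction breaks at precisely the points you need to fill in. Your closing inference that $\Phi_{K(D)}$ and $\varphi_{K(D)}$ are conjugate because they have the same image in $\mathrm{M}_{d}^{n,ss}$ commits the same error of identifying ``same image in the good quotient'' with ``same orbit''; this is exactly the caveat built into the statement of Theorem~\ref{SSR2}.

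Even on the stable locus, the step you yourself call the delicate heart --- that a sufficiently ramified finite cover $D\to C$ converts the stabilizer-ambiguous pseudo-cocycle into an honest $\PGL(n+1)$-cocycle --- is asserted rather than proved. It amounts to lifting $C\to \mathrm{M}_{d}^{n,s}$ to the quotient stack after a finite cover, which is true but needs a real input (a finite scheme cover of the Deligne--Mumford quotient stack, or an argument with the gerbe obstruction); ramification alone does not obviously kill it. The paper's route supplies both missing pieces at once: inside the connected, $G$-invariant, closed set $\pi^{-1}(C)$, which surjects onto $C$, one chooses an integral curve dominating $C$, necessarily finite over $C$ by dimension count; the tautological family over that curve in $\mathbb{P}^{N}$ gives the local models, and gluing finitely many of them yields $\mathbf{P}(\mathcal{E})$ and $\Phi$. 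I would recommend replacing your second step by this argument, or at minimum restricting your cocycle construction to the stable locus and handling the strictly semistable fibers by the closure-of-orbits argument.
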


\noindent This can be seen by using an alternative formulation. Semistable reduction can be thought of as extending a rational map defined over a field $K$ to a
rational map defined over a discrete valuation ring $R$ whose fraction field is $K$, in a way that is not too degenerate. The reason a discrete valuation ring
suffices is that once we know we can extend to a discrete valuation ring, we can extend to some larger integral domain.

\medskip We thus obtain the following equivalent formulation of semistable reduction:

\begin{thm}\label{SSR2}Let $G$ be a geometrically reductive group acting on a projective variety $X$ whose stable and semistable spaces are $X^{s}$ and $X^{ss}$
respectively. Let $R$ be a discrete valuation ring with fraction field $K$, and let $x_{K} \in X^{s}_{K}$. Then for some finite extension $K'$ of $K$, with $R'$ the
integral closure of $R$ in $K'$, $x_{K}$ has an integral model over $R'$ with semistable reduction modulo the maximal ideal. In other words, we can find some $A \in
G(\overline{K})$ such that $A\cdot x_{K}$ has semistable reduction. If $x_{K} \in X^{ss}_{K}$, then the same result is true, except that $x_{R'}$ could be an
integral model for some $x'_{K'}$ mapping to the same point of $X^{ss}//G$ such that $x'_{K'} \notin G\cdot x_{K}$.\end{thm}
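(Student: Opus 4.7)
The plan is to reduce semistable reduction to the valuative criterion of properness applied to the GIT quotient, together with a lifting argument through the quotient morphism. Since $X$ is projective and $G$ is geometrically reductive, standard GIT gives that the quotient $Y := X^{ss}//G$ is a projective variety, and in particular proper. Writing $\pi \colon X^{ss} \to Y$ for the quotient map, $\bar{x}_{K} := \pi(x_{K})$ is a $K$-point of $Y$, which by the valuative criterion of properness applied to $Y$ extends uniquely to an $R$-valued point $\bar{x}_{R} \colon \Spec R \to Y$.

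The heart of the proof will be lifting $\bar{x}_{R}$ back through $\pi$, at the cost of passing to a finite extension of $R$. First I would form the pullback $Z := X^{ss} \times_{Y} \Spec R$, which carries $x_{K}$ as a $K$-point and surjects onto $\Spec R$ because $\pi$ is surjective. Choosing an irreducible closed subscheme $Z_{0} \subseteq Z$ that dominates $\Spec R$ and contains $x_{K}$ in its generic fiber, its normalization $\widetilde{Z}_{0} \to Z_{0}$ is finite over $\Spec R$; localizing at a point over the closed point of $\Spec R$ produces a finite extension of discrete valuation rings $R \subseteq R'$ with fraction field $K'$, together with a morphism $\tilde{x}_{R'} \colon \Spec R' \to X^{ss}$ whose composition with $\pi$ agrees with the base change of $\bar{x}_{R}$ to $\Spec R'$. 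The closed fiber of $\tilde{x}_{R'}$ then lies in $X^{ss}$ by construction, so $\tilde{x}_{R'}$ has semistable reduction in the desired sense.

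It remains to compare the generic fiber $\tilde{x}_{K'}$ with the base change $x_{K'}$ of $x_{K}$, both of which map to $\bar{x}_{K'}$ in $Y$. In the stable case, the fibers of $\pi$ restricted to $X^{s}$ are single $G$-orbits, so $\tilde{x}_{K'} = A \cdot x_{K'}$ for some $A \in G(\overline{K})$; after enlarging $K'$ so that the entries of $A$ lie in $K'$, the point $\tilde{x}_{R'}$ is the desired semistable integral model of $A \cdot x_{K}$. In the semistable case, distinct $G$-orbits can share a closed point of $Y$ when their closures in $X^{ss}$ meet, so $\tilde{x}_{K'}$ may lie in a different orbit from $x_{K'}$, yielding exactly the weaker conclusion stated. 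The main obstacle will be the lifting step: because $\pi$ is only a good and not a geometric quotient, one cannot hope to lift $R$-points to $R$-points of $X^{ss}$ without allowing a finite extension, and one must treat potentially reducible or non-equidimensional fibers of $Z \to \Spec R$, which is precisely what the choice of a dominant irreducible component followed by normalization accomplishes.
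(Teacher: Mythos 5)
Your overall strategy --- extend the image point in the proper quotient $Y = X^{ss}//G$ by the valuative criterion, then lift through $\pi$ after a finite base extension, and finally compare orbits inside a fiber of $\pi$ --- is the same as the paper's. But the lifting step as written fails. You ask for an irreducible closed subscheme $Z_{0} \subseteq Z = X^{ss}\times_{Y}\Spec R$ that contains $x_{K}$ in its generic fiber and whose normalization is finite and surjective over $\Spec R$. An irreducible scheme finite over $\Spec R$ has zero-dimensional generic fiber, so if it contains the closed point $x_{K}$ of $Z_{K}$ it must equal the closure $\overline{\{x_{K}\}}$ of $x_{K}$ in $Z$; and that closure need not meet the special fiber at all. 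Concretely, let $G = \mathbb{G}_{m}$ act on $X = \mathbb{P}^{1}$ with weights $(1,-1)$, so that $X^{ss} = X^{s} = \mathbb{G}_{m}$ and $Y$ is a point, and take $x_{K} = (\pi : 1)$ with $\pi$ a uniformizer of $R$. Then $Z = \mathbb{G}_{m,R} = \Spec R[t,t^{-1}]$ and $\overline{\{x_{K}\}} = V(t-\pi) \cong \Spec K$ is disjoint from the special fiber, so no irreducible curve in $Z$ through $x_{K}$ surjects onto $\Spec R$. This is precisely the situation the theorem is about: $x_{K}$ itself has no semistable model, and one must first pass to $A\cdot x_{K} = (1:1)$. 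Requiring $Z_{0}$ to pass through $x_{K}$ therefore assumes the conclusion in exactly the nontrivial case.

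The missing ingredient is the one the paper supplies: do not insist that the lifting curve contain $x_{K}$, only that its generic fiber lie in $\pi^{-1}(\bar{x}_{K})$, and use connectedness to produce it. Setting $C = \overline{\{\bar{x}_{K}\}} \cong \Spec R$, in the stable case $\pi^{-1}(C) = \overline{G\cdot x_{K}}$ is connected because $G$ is, and it surjects onto $C$ because images of invariant closed sets under a good quotient are closed; a connected finite-type $R$-scheme surjecting onto $\Spec R$ must have an irreducible component meeting both fibers, and inside such a component one cuts out an integral curve surjecting onto $\Spec R$, which after normalizing and localizing yields $R'$. Its generic fiber lies in the fiber $\pi^{-1}(\bar{x}_{K})$, which in the stable case is the single orbit $G\cdot x_{K}$ (and in the semistable case a union of orbits with intersecting closures, whence the weaker conclusion). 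Your final comparison paragraph is correct and would then produce $A$; it is only the construction of the lift that needs to be redone along these lines.
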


\begin{proof}We follow the method used in~\cite{Zha}. Let $C$ be the Zariski closure of $x_{K}$ in $X^{ss}_{R}//G$, and reduce it modulo the maximal ideal to obtain
$x_{k}$, where $k$ is the residue field of $R$. Observe that $C$ is a one-dimensional subscheme of $X^{ss}_{\overline{k}}//G$ which is isomorphic to $\Spec R$, and
is as a result connected. Since $G$ is connected, the preimage $\pi^{-1}(C)$ is also connected: when $x_{K}$ is stable it follows from the fact that $\pi^{-1}(C)$
is the Zariski closure of $G\cdot x_{K}$ in $X^{ss}$, and even when it is not, $\pi^{-1}(C)$ is the union of connected orbits whose closures intersect. Since
further $\pi^{-1}(C)$ surjects onto $C$, we can find an integral one-dimensional subscheme mapping surjectively to $C$. This subscheme necessarily maps
finite-to-one onto $C$ by dimension counting, so it is isomorphic to some finite extension ring $R'$, giving us $K'$ as its fraction field.\end{proof}

\begin{rem}Theorem~\ref{SSR2} can also be proven in a much more explicit way, producing for each $\varphi_{K} \in \Hom_{d}^{n, ss}$ a sequence of $A$'s conjugating
it to a model with semistable reduction.\end{rem}

\medskip This leads to the natural question of which vector bundle classes can occur for each $C$, and more generally for each choice of $n$ and $d$. One
interesting subquestion is whether, for every $C$, we can choose the bundle to be trivial. Equivalently, it asks whether for each $C$ we can find a proper $D
\subseteq \Hom_{d}^{n, ss}$ that maps finite-to-one onto $C$. For most curves upstairs, the answer should be positive, by simple dimension counting: as demonstrated
in \cite{Sil96} and \cite{Lev}, the complement of $\Hom_{d}^{n, ss}$ has high codimension, equal to about half of $N$. However, it turns out that the answer is
sometimes negative, and in fact, for every $n$ and $d$ we can find a $C$ with only nontrivial bundle classes. More precisely:

\begin{thm}\label{bad1}For every $n$ and $d$, there exists a curve with no trivial bundle class satisfying semistable reduction.\end{thm}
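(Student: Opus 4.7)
The plan is to realize a family of semistable maps as a self-map of a genuinely non-trivial projective bundle over a chosen curve, in such a way that no finite base change can trivialize the underlying bundle. Fix any smooth complete curve $C$ equipped with a line bundle $\mathcal{L}$ of non-zero degree, for instance $C = \mathbb{P}^{1}$ and $\mathcal{L} = \mathcal{O}(k)$ for $k$ large in terms of $n, d$, and set $\mathcal{E} = \mathcal{O}_{C}^{\oplus n} \oplus \mathcal{L}$. In the homogeneous coordinates induced by this splitting, a self-map $\Phi$ of $\mathbf{P}(\mathcal{E})$ of relative degree $d$ is specified by $(n+1)$ forms $\Phi_{0}, \ldots, \Phi_{n}$ whose monomials $x_{0}^{a_{0}} \cdots x_{n}^{a_{n}}$ each live in $\mathcal{L}^{a_{n}}$, so that to combine them into sections of a common line bundle on $C$ the coefficients must be global sections of suitably twisted powers of $\mathcal{L}$. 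Once $\deg \mathcal{L}$ is taken large enough, all the relevant cohomology groups $H^{0}(C, \mathcal{L}^{j})$ are non-zero, and the parameter space of such $\Phi$ is a non-empty variety of positive dimension.

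The main technical step is to choose $\Phi$ so that its restriction to every geometric fiber of $\mathbf{P}(\mathcal{E}) \to C$ is a semistable map on $\mathbb{P}^{n}$. By Theorem~\ref{containment} and the codimension bounds of~\cite{Sil96, Lev}, the unstable locus in $\mathbb{P}^{N}$ has substantial codimension, so fiberwise semistability is an open condition in the parameter space. The main obstacle is verifying that the parameter space is large enough relative to the closed unstable locus to allow some choice whose fiber lands in $\Hom_{d}^{n,ss}$ at every point of $C$ simultaneously; this is clear in small cases (for $n = 1, d = 2, C = \mathbb{P}^{1}$, one writes down a generic self-map of $\mathbb{F}_{1}$ of fiberwise degree $2$) and should follow in general from a dimension count once $\deg \mathcal{L}$ is sufficiently large.

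Granting such a $\Phi$, consider the induced classifying map $\psi: C \to \mathrm{M}_{d}^{n, ss}$. For generic $\Phi$ each fiber is a morphism, hence stable, so $\psi$ lands in the stable locus; the pullback $\psi^{-1}(\Hom_{d}^{n, s})$ is then a $\PGL(n+1)$-torsor on $C$ whose associated $\mathbb{P}^{n}$-bundle is canonically isomorphic to $\mathbf{P}(\mathcal{E})$, an invariant of $\psi$ independent of the chosen presentation. Consequently, for any finite cover $f: D \to C$, every $\mathbb{P}^{n}$-bundle on $D$ supporting a self-map that realizes $\varphi_{K(D)}$ must be isomorphic to $f^{*}\mathbf{P}(\mathcal{E}) = \mathbf{P}(\mathcal{O}_{D}^{\oplus n} \oplus f^{*}\mathcal{L})$. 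Since $\deg(f^{*}\mathcal{L}) = (\deg f)(\deg \mathcal{L}) \neq 0$, the line bundle $f^{*}\mathcal{L}$ is non-trivial on $D$, and by Krull--Schmidt the decomposition $\mathcal{O}_{D}^{\oplus n} \oplus f^{*}\mathcal{L}$ is not of the form $\mathcal{N}^{\oplus(n+1)}$; hence the bundle is non-trivial as a $\mathbb{P}^{n}$-bundle, and $C$ admits no trivial bundle class.
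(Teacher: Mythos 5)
Your strategy is genuinely different from the paper's: the paper exhibits an explicit curve (the locus of polynomial maps) and shows directly that every curve in $\Hom_{d}^{n,ss}$ lying over it must contain a point with $c_{0}=0$, which is unstable; you instead try to build a family on a nontrivial bundle and argue the bundle class is rigid. The skeleton of your rigidity step is not unreasonable --- it is close in spirit to Proposition~\ref{uni} --- but the load-bearing step of your argument fails.

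The fatal gap is the claim that ``for generic $\Phi$ each fiber is a morphism, hence stable.'' This is impossible for a non-isotrivial family over a complete curve: $\mathrm{M}_{d}^{n}$ is affine (it is the invariant-theoretic quotient of the affine variety $\Hom_{d}^{n}$, the complement of the resultant hypersurface), so a complete curve in $\mathrm{M}_{d}^{n,ss}$ must meet the boundary $\mathrm{M}_{d}^{n,ss}\setminus\mathrm{M}_{d}^{n}$. Equivalently, the fiberwise resultant of your $\Phi$ is a section of a line bundle on $C$ of positive degree and therefore vanishes somewhere, so some fiber is degenerate. In particular your ``clear'' base case --- a self-map of $\mathbb{F}_{1}$ of fiberwise degree $2$ all of whose fibers are morphisms --- does not exist. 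You could retreat to requiring only that every fiber be stable (which $\Hom_{d}^{n,s}\supsetneq\Hom_{d}^{n}$ permits), but then the existence of such a $\Phi$ on a prescribed nontrivial bundle is exactly the hard content, and the dimension count is not routine: the constraint of living on $\mathbf{P}(\mathcal{O}^{\oplus n}\oplus\mathcal{L})$ forces the coefficient of $x_{0}^{a_0}\cdots x_n^{a_n}$ in $\Phi_i$ to be a section of a twist of $\mathcal{L}^{-a_{n}}$, i.e.\ it kills or constrains precisely the coefficients whose vanishing pushes $\varphi$ toward the unstable locus, so ``high codimension of the unstable locus in $\mathbb{P}^{N}$'' does not transfer to your parameter space without an argument. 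A secondary issue: the rigidity step treats $\Hom_{d}^{n,s}\to\mathrm{M}_{d}^{n,s}$ as a $\PGL(n+1)$-torsor, which fails along the stabilized locus; for $n=1$, $d=2$ that locus is a cuspidal cubic in $\mathrm{M}_{2}^{ss}=\mathbb{P}^{2}$, hence a divisor that every complete curve meets, so the twisting ambiguity there must be addressed before you can conclude that every realization over a cover $f\colon D\to C$ carries the bundle $f^{*}\mathbf{P}(\mathcal{E})$.
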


\begin{rem}\label{bad1.1}An equivalent formulation for Theorem~\ref{bad1} is that for every $n$ and $d$ we can find a curve $C \subseteq \mathrm{M}_{d}^{n, ss}$
such that there does not exist a curve $D \subseteq \Hom_{d}^{n, ss}$ mapping onto $C$ under $\pi$.\end{rem}

\medskip Although most curves in $\Hom_{d}^{n, ss}$ can be completed, it does not imply we can find a nontrivial bundle on an open dense set of the Chow variety of
$\mathrm{M}_{d}^{n, ss}$. In fact, as we will see in section~\ref{goodcase}, there exist components of the Chow variety of $\mathrm{M}_{d}^{n, ss}$ where, at least
generically, a nontrivial bundle is required.

\medskip Our study of bundle classes will now split into two cases. In the case of curves satisfying semistable reduction with a trivial bundle, the reformulation
of Remark~\ref{bad1.1}, in its positive form, means that we can study $D$ directly as a curve in $\mathbb{P}^{N}$. We can bound the degree of the map from $D$ to
$C$ in terms of the stabilizer groups that occur on $D$. More precisely:

\begin{prop}\label{GIT}Let $X$ be a projective variety over an algebraically closed field with an action by a geometrically reductive linear algebraic group $G$.
Using the terminology of geometric invariant theory, let $D$ be a complete curve in the stable space $X^{s}$ whose quotient by $G$ is a complete curve $C$; say the
map from $D$ to $C$ has degree $m$. Suppose the stabilizer is generically finite, of size $h$, and either $D$ or $C$ is normal. Then there exists a finite subgroup
$S_{D} \subseteq G$, of order equal to $mh$, such that for all $x \in D$ and $g \in G$, $gx \in D$ iff $g \in S_{D}$.\end{prop}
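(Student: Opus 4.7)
My plan is to analyze the incidence correspondence
\[W = \{(g, x) \in G \times D : gx \in D\}\]
and show that it takes the product form $S_D \times D$, where $S_D = \{g \in G : gD = D\}$ is the setwise stabilizer of $D$ in $G$. Both halves of the conclusion drop out immediately once this is done: the equivalence $gx \in D \iff g \in S_D$ is precisely the assertion $W = S_D \times D$, and the order count $|S_D| = mh$ comes from matching the product decomposition against the generic fiber size of the second projection $W \to D$.

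I would first verify that $W \to D$ is finite with generic fiber of size $mh$. For a generic $x \in D$ the fiber $\{g : gx \in D\}$ breaks up into cosets of $\Stab(x)$, one for each point of $Gx \cap D$; the latter has $m$ elements and the former has $h$ elements. The normality of $D$ or $C$ is used exactly here, to guarantee that the scheme-theoretic generic fiber of $D \to C$ is reduced of cardinality $m$. That the fibers of $W \to D$ are finite everywhere (and not just generically) follows from the fact that all stabilizers on $X^{s}$ are finite in the stable locus. Hence $W$ has pure dimension $1$ and every irreducible component of $W$ surjects onto $D$.

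The crux of the argument is now the following observation. Each irreducible component $Z$ of $W$ is finite over $D$, hence itself complete because $D$ is complete. But $Z$ also projects into $G$, which is affine as a linear algebraic group, and any morphism from a complete variety to an affine variety is constant. So this projection has image a single point $g_Z \in G$, and $Z = \{g_Z\} \times D$. The incidence condition then reads $g_Z x \in D$ for every $x \in D$, which forces $g_Z D = D$ and so $g_Z \in S_D$. Conversely every element of $S_D$ gives such a component, so $W = \bigsqcup_{g \in S_D} \{g\} \times D$; comparing with the generic fiber count from the previous step yields $|S_D| = mh$.

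I expect the most delicate technical point to be the generic fiber cardinality count, where the normality hypothesis is essential to control non-reducedness or inseparability in $D \to C$. The conceptual heart of the argument, however, is the ``complete-to-affine-is-constant'' step, which explains decisively why completeness of $D$ cannot be dropped: for non-complete $D$ one can easily produce non-constant one-parameter families in $G$ that permute points of a single fiber of $\pi|_D$, giving irreducible components of $W$ that are not of the form $\{g\} \times D$ and making $|S_D|$ strictly smaller than $mh$.
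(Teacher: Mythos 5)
Your incidence-correspondence formulation is a clean repackaging of what the paper does with symmetric products --- both arguments ultimately rest on mapping a complete object into the affine group $G$ and concluding constancy --- but as written there is one load-bearing step you have not justified: the claim that $W \to D$ is a \emph{finite} morphism, rather than merely quasi-finite. Finiteness of the fibers of $W \to D$ (which is all that finiteness of stabilizers on $X^{s}$, together with finiteness of $Gx \cap D$, gives you) does not imply that the irreducible components of $W$ are proper over $D$: a quasi-finite dominant map of curves can miss points, and a component of $W$ could in principle escape to infinity in $G$ as $x$ approaches a special point of $D$. If that happens, the component is not complete, its projection to $G$ need not be constant, and the product decomposition $W = S_{D} \times D$ collapses. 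This is not a hypothetical worry: run your argument verbatim with $X^{ss}$ in place of $X^{s}$ (still with finite stabilizers) and the conclusion can fail, precisely because properness of the action is lost there. Likewise, your assertion that ``every irreducible component of $W$ surjects onto $D$'' needs properness, not just pure dimension one.

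The missing input is the standard GIT fact that on the stable locus the map $G \times X^{s} \to X^{s} \times X^{s}$, $(g,x) \mapsto (gx,x)$, is proper; this is strictly stronger than finiteness of stabilizers, and it is exactly the fact the paper's Lemma~\ref{sym} packages (there, in the form of properness of the induced map on symmetric products). Once you invoke it the repair is immediate: $W$ is the preimage of the closed subset $D \times D$ under this proper map, hence proper over $D \times D$ and therefore over the complete curve $D$, so each component of $W$ is complete and your complete-to-affine step goes through. With that one insertion your route is, if anything, tidier than the paper's: it avoids the symmetric-product bookkeeping and the extension of the rational map $f_{D}(x) = S_{D}(x)\cdot x$ across the bad locus, which is where the paper spends the normality hypothesis (your use of normality, only to control the generic fiber count, is a genuinely lighter use of that assumption).
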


\begin{cor}\label{GITc}With the same notation and conditions as in Proposition~\ref{GIT}, the map from $D$ to $C$ is ramified precisely at points $x \in D$ where
the stabilizer group is larger than $h$, and intersects $S_{D}$ in a larger subgroup than in the generic case.\end{cor}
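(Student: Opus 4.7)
The plan is to bootstrap Proposition~\ref{GIT}: once one knows that $D$ carries an action of the finite group $S_{D} \subseteq G$ of order $mh$ whose orbits are exactly the fibers of $D \to C$, the corollary reduces to the standard ramification theory for quotients by a finite group acting on a normal curve. The key preliminary observation is that for every $x \in D$ one automatically has $\Stab_{G}(x) \subseteq S_{D}$: any $g \in G$ fixing $x$ satisfies $gx = x \in D$, which by Proposition~\ref{GIT} forces $g \in S_{D}$. Consequently $\Stab_{G}(x) = \Stab_{G}(x) \cap S_{D} = \Stab_{S_{D}}(x)$, so the two phrasings in the corollary---the stabilizer being larger than $h$, and its intersection with $S_{D}$ being larger than in the generic case---refer to the same condition.

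Next, I would identify $\pi \colon D \to C$ with the quotient map for the $S_{D}$-action: the fibers of $\pi$ are the $G$-orbits of points of $D$, which by Proposition~\ref{GIT} coincide with the $S_{D}$-orbits, so (using normality of $D$ or $C$ to upgrade the set-theoretic equality to a scheme-theoretic one) $C \cong D / S_{D}$. The fiber of $\pi$ over $\pi(x)$ then has size $|S_{D}| / |\Stab_{S_{D}}(x)| = mh / |\Stab_{G}(x)|$, which equals $m$ exactly when $|\Stab_{G}(x)| = h$, consistent with the degree of $\pi$ and with the generic hypothesis.

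To convert fiber size into ramification index, I would isolate the universal kernel $K = \bigcap_{y \in D} \Stab_{S_{D}}(y)$, a normal subgroup of $S_{D}$ acting trivially on $D$. The quotient $S_{D}/K$ acts faithfully on $D$ with the same geometric quotient $C$, so comparing degrees yields $|S_{D}/K| = m$ and hence $|K| = h$. Since $K \subseteq \Stab_{S_{D}}(x)$ for every $x$, the effective stabilizer $\Stab_{S_{D}/K}(x)$ has order $|\Stab_{G}(x)|/h$, and this is precisely the ramification index of $\pi$ at $x$ (in characteristic zero, or more generally in the tame case). Ramification thus occurs exactly where $|\Stab_{G}(x)| > h$. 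The point that most requires care---and the main technical obstacle---is verifying that the universal kernel $K$ really has order $h$ rather than something smaller, so that the naive orbit-size computation translates correctly into ramification data for a faithful finite-group action; this is where the degree hypothesis from Proposition~\ref{GIT}, together with the normality assumption, does exactly what is needed.
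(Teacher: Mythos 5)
Your proof is correct and follows essentially the same route the paper intends: the paper gives no explicit proof of this corollary, but its later restatement (the remark that $D \to C$ is Galois with generic stabilizer $H$ normal in $S_{D}$, and ramification degree $[\Stab(x)\cap S_{D}:H]$) is exactly your argument, with your universal kernel $K$ playing the role of $H$. Your preliminary observation that $\Stab_{G}(x) \subseteq S_{D}$, which shows the two conditions in the corollary coincide, and your verification that $|K| = h$ are precisely the points the paper leaves implicit.
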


\medskip If the genus of $C$ is $0$, then the only way the the map from $D$ to $C$ could have high degree is if it ramifies over many points; therefore,
Corollary~\ref{GITc} forces the degree to be small, at least as long as $C$ is contained in the stable locus.

\medskip In the case of curves that only satisfy semistable reduction with a nontrivial bundle, we do not have a description purely in terms of coordinates.
Instead, we will study which bundle classes can be attached to every curve $C$. The question of which bundles occur is an invariant of $C$; therefore, it is
essentially an invariant that we can use to study the scheme $\Hom(C, \mathrm{M}_{d}^{n, ss})$. In the sequel, we will study the scheme using the bundle class set
and height invariants.

\medskip For the study of which nontrivial bundle classes can occur, first observe that fixing a $D$ for which a bundle exists, we can apply the reformulation of
Theorem~\ref{SSR2} to obtain a unique extension of $\varphi$ locally. This can be done at every point, so it is true globally, so we have,

\begin{prop}\label{uni}Using the notation of Theorem~\ref{SSR1}, the bundle class $\mathbf{P}(\mathcal{E})$ depends only on $D$ and its trivialization $U_{i}$,
$U_{i} \hookrightarrow \Hom_{d}^{n, ss}$.\end{prop}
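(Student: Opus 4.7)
The plan is to show that, once $D$ and the open cover $\{U_i\}$ together with the embeddings $U_i \hookrightarrow \Hom_d^{n,ss}$ are fixed, the transition cocycle defining $\mathbf{P}(\mathcal{E})$ admits no residual freedom, so that the bundle class is determined by this data alone.

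First, at each point $x \in D$ with local ring $R_x$ and fraction field $K(D)$, I would apply Theorem~\ref{SSR2} to $\SL(n+1)$ acting on $\mathbb{P}^{N}$: the generic map $\varphi_{K(D)}$ admits a semistable integral model over $R_x$, unique up to the action of $\PGL(n+1)$ (any finite base change required has already been absorbed into the choice of $D$). The prescribed embedding on a neighborhood $U_i \ni x$ singles out a specific representative of this orbit and thereby rigidifies the local extension of $\Phi$.

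Second, on each overlap $U_i \cap U_j$ the two embeddings realize the same generic fiber of $\Phi$ in two coordinate systems, so pointwise there exists $g_{ij}(y) \in \PGL(n+1)$ with $\varphi_j(y) = g_{ij}(y)\cdot\varphi_i(y)\cdot g_{ij}(y)^{-1}$. By Theorem~\ref{finite} the stabilizer in $\PGL(n+1)$ of any point of $\Hom_d^{n,ss}$ is finite, so $g_{ij}$ is determined up to a locally constant element of a finite group; irreducibility of $U_i \cap U_j$ then pins $g_{ij}$ down as a unique morphism, and the cocycle identity $g_{jk} g_{ij} = g_{ik}$ on triple overlaps follows by applying the same uniqueness at a single reference point.

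The main obstacle will be handling the finite stabilizer ambiguity in selecting each $g_{ij}$: a priori, different branches of the stabilizer torsor at a reference point could yield distinct cocycles. I would overcome this by observing that the family of stabilizers assembles into a quasi-finite \'etale group scheme over $\Hom_d^{n,ss}$, so that a coherent choice on one sheet propagates uniquely across each connected overlap. Once the cocycle $\{g_{ij}\}$ is pinned down in this manner, the class of $\mathbf{P}(\mathcal{E})$ in $H^{1}(D, \PGL(n+1))$ is a function of the input data $(D, \{U_i \hookrightarrow \Hom_d^{n,ss}\})$ alone, as claimed.
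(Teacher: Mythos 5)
Your overall strategy is the same as the paper's: the paper's entire argument is the one-sentence observation preceding the statement, namely that Theorem~\ref{SSR2} gives a unique local extension of $\varphi$ at each point of $D$, and that this globalizes. You are attempting to supply the gluing details that the paper omits, which is the right instinct, but the key step does not go through as written.

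The genuine gap is in your treatment of the stabilizer ambiguity in the cocycle $\{g_{ij}\}$. Your \'etale-group-scheme argument (setting aside that the stabilizer group scheme is not \'etale over all of $\Hom_{d}^{n,ss}$ --- its fibers jump in size on the stabilized locus and can be positive-dimensional at strictly semistable points) only shows that \emph{one} choice of $g_{ij}$ at a reference point propagates to a well-defined section over a connected overlap. It does not show that the finitely many distinct initial choices, differing by elements of the generic stabilizer $H$, yield cohomologous cocycles in $H^{1}(D,\PGL(n+1))$: two cocycles differing by an $H$-valued cocycle need not define isomorphic $\mathbb{P}^{n}$-bundles, and nothing in your argument rules this out. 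The conclusion is automatic only when the generic stabilizer along $D$ is trivial, which is the generic situation but is not an assumption of the proposition. Two smaller problems compound this: Theorem~\ref{finite} is stated only for points of $\Hom_{d}^{n}$ (morphisms), not for all of $\Hom_{d}^{n,ss}$, so you cannot quote it for degenerate semistable points; and your pointwise existence claim for $g_{ij}(y)$ fails at strictly semistable points, where the two local models need only have intersecting orbit closures rather than lie in a common orbit --- precisely the residual ambiguity the paper acknowledges in the remark immediately following the proposition.
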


\medskip Note that the bundle class does not necessarily depend only on $D$, regarded as an abstract curve with a map to $C$. The reason is that a point of $D$ may
not be stable, which means it may correspond to one of several different orbits, whose closures intersect. However, there are only finitely many orbits
corresponding to each point, so the bundle class depends on $D$ up to a finite amount; if $C$ happens to be contained in the stable locus, then it depends only on
$D$.

\medskip Thus we can study which bundle classes occur for a given $C$. We will content ourselves with rational curves, for which there is a relatively easy
description of all projective bundles. Recall that every vector bundle over $\mathbb{P}^{1}$ splits as a direct sum of line bundles, and that the bundle
$\bigoplus_{i}\mathcal{O}(m_{i})$ is projectively equivalent to $\bigoplus_{i}\mathcal{O}(l + m_{i})$ for all $l \in \mathbb{Z}$. In other words, a
$\mathbb{P}^{n}$-bundle over $\mathbb{P}^{1}$ can be written as $\mathcal{O} \oplus \mathcal{O}(m_{1}) \oplus \ldots \oplus \mathcal{O}(m_{n})$; if the $m_{i}$'s
are in non-decreasing order, then the expression uniquely determines the bundle's class. We will show that,

\begin{prop}\label{multi}There exists a curve $C$ for which multiple non-isomorphic bundle classes can occur. In fact, suppose $C$ is isomorphic to $\mathbb{P}^{1}$,
and there exists $U \subseteq \Hom_{d}^{n, ss}$ mapping finite-to-one into $C$ such that $U$ is a projective curve minus a point. Then there are always infinitely
many possible classes: if the class of $U$ is thought of as splitting as $\mathbf{P}(\mathcal{E}) = \mathcal{O} \oplus \mathcal{O}(m_{1}) \oplus \ldots \oplus
\mathcal{O}(m_{n})$, where $m_{i} \in \mathbb{N}$, then for every integer $l$ the class $\mathcal{O} \oplus \mathcal{O}(lm_{1}) \oplus \ldots \oplus
\mathcal{O}(lm_{n})$ also occurs.\end{prop}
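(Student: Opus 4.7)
The plan is to realize the new bundle classes by precomposing the given data with degree-$l$ self-covers of $\mathbb{P}^{1}$. First I fix a coordinate $t$ on $D = \overline{U} \cong \mathbb{P}^{1}$ with the missing point at $t = \infty$, so that $U \cong \Spec k[t]$ and the hypothesis becomes a family $t \mapsto \varphi_{t}$ of semistable maps. By Theorem~\ref{SSR1} and Proposition~\ref{uni}, extending this family across $t = \infty$ produces the given bundle $\mathbf{P}(\mathcal{E}) = \mathcal{O} \oplus \mathcal{O}(m_{1}) \oplus \ldots \oplus \mathcal{O}(m_{n})$; concretely, this is encoded by a transition matrix $A(t) \in \PGL(n+1, k(t))$ chosen so that $A(t)\varphi_{t}A(t)^{-1}$ extends to a semistable map at $t = \infty$, with the split type $(m_{1}, \ldots, m_{n})$ read off from the pole and zero orders of $A(t)$ at infinity.

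For each integer $l \geq 1$, I then take the degree-$l$ cover $\rho_{l}: \mathbb{P}^{1}_{s} \to \mathbb{P}^{1}_{t}$ given by $s \mapsto s^{l}$, set $D' = \mathbb{P}^{1}_{s}$, and define $\pi' = \pi \circ \rho_{l}: D' \to C$. The pullback of the trivialization gives a family $s \mapsto \varphi_{s^{l}}$ over $U' = \rho_{l}^{-1}(U) \cong \mathbb{A}^{1}_{s}$, still an inclusion into $\Hom_{d}^{n, ss}$ because every fiber is one of the original semistable maps. This is a new instance of the hypotheses of Theorem~\ref{SSR1}, and Proposition~\ref{uni} yields a unique bundle on $D'$ extending this trivialization. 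By substituting $t = s^{l}$, the matrix $A(s^{l})$ serves as a valid extension matrix for the new family, and the orders of its entries at $s = \infty$ are exactly $l$ times those of $A(t)$ at $t = \infty$. It follows that the new bundle class is $\rho_{l}^{*}\bigl(\mathcal{O} \oplus \mathcal{O}(m_{1}) \oplus \ldots \oplus \mathcal{O}(m_{n})\bigr) = \mathcal{O} \oplus \mathcal{O}(lm_{1}) \oplus \ldots \oplus \mathcal{O}(lm_{n})$, since $\deg \rho_{l}^{*}\mathcal{O}(m) = lm$. Provided some $m_{i} > 0$ (otherwise the original bundle is already trivial and the conclusion is vacuous), distinct $l \geq 1$ give non-isomorphic bundle classes, so infinitely many classes occur.

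The step I expect to require the most care is confirming that the unique bundle on $D'$ extending $s \mapsto \varphi_{s^{l}}$ is literally the pullback $\rho_{l}^{*}\mathbf{P}(\mathcal{E})$, rather than some other bundle that happens to also fit. This amounts to a functoriality of the semistable extension procedure under finite base change of $\mathbb{P}^{1}$: since Proposition~\ref{uni} gives the bundle independently of the chosen extension matrix, and since $A(s^{l})$ is \emph{an} extension matrix for the pulled-back family, the two bundles must coincide. Once this functoriality is in place, the proposition follows directly from the line-bundle computation $\deg \rho_{l}^{*}\mathcal{O}(m) = lm$.
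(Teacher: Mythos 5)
Your proof is correct and takes essentially the same approach as the paper: both realize the class $\mathcal{O} \oplus \mathcal{O}(lm_{1}) \oplus \ldots \oplus \mathcal{O}(lm_{n})$ by precomposing with the degree-$l$ power map so that the transition matrix becomes $A(c^{l})$, multiplying all splitting degrees by $l$. The only cosmetic difference is that you pull back the entire family along $s \mapsto s^{l}$ (which cleanly produces a new curve $D'$ finite over $C$), while the paper reparametrizes only the chart near the bad point at infinity; the resulting transition function, and hence the bundle class, is the same.
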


\medskip Proposition~\ref{multi} frustrated our initial attempt to obtain an easy classification of bundles based on curves. However, it raises multiple interesting
questions instead. First, the construction uses a rational $D$ mapping finite-to-one onto $C$, and going to higher $m$ involves raising the degree of the map $D \to
C$. It may turn out that bounding the degree bounds the bundle class; we conjecture that if we fix the degree of the map then we obtain only finitely many bundle
classes. Furthermore, in analogy with the consequences of Corollary~\ref{GITc}, we should conversely be able to bound the degree of the map in terms of $C$ and the
bundle class, at least for rational $C$.

\medskip Second, it is nontrivial to find the minimal $m_{i}$'s for which a bundle splitting as $\mathcal{O} \oplus \mathcal{O}(m_{1}) \oplus \ldots \oplus
\mathcal{O}(m_{n})$ would satisfy semistable reduction; the case of $n = 1$ could be stated particularly simply, as the question would be about the minimal $m$ for
which $\mathcal{O} \oplus \mathcal{O}(m)$ occurs.

\medskip In section~\ref{GITrecap}, we recap the basics of geometric invariant theory, which we will use in the proof of Theorem~\ref{bad1}. In sections~\ref{exbad}
and~\ref{pfbad} we will illustrate Theorem~\ref{bad1}: in section~\ref{exbad} we will give some examples and compute the bundle classes that occur, proving
Proposition~\ref{multi} on the way, while in section~\ref{pfbad} we will prove the theorem. In section~\ref{goodcase} we will focus on the trivial bundle case,
proving Proposition~\ref{GIT} and defining the height function, which will impose constraints on which curves admit a trivial bundle; this will allow us to obtain a
large family of curves $C$ in $\mathrm{M}_{2}^{ss}$ with no trivial bundle.

\section{A Description of The Stable and Semistable Spaces}\label{GITrecap}

\noindent Recall from geometric invariant theory that a when a geometrically reductive linear algebraic group $G$ has a linear action on a projectivized vector
space $\mathbb{P}(V)$, we have,

\begin{defn}\label{sss}A point $x \in V$ is called semistable (resp. stable) if any of the following equivalent conditions hold:

\begin{enumerate}
  \item There exists a $G$-invariant homogeneous section $s$ such that $s(x) \neq 0$ (resp. same condition, and the action of $G$ on $x$ is closed).
  \item The closure of $G\cdot x$ does not contain $0$ (resp. $G\cdot x$ is closed).
  \item Every one-parameter subgroup $T$ acts on $x$ with both nonnegative and nonpositive weights (resp. negative and positive weights).
\end{enumerate}\end{defn}

\begin{rem}The last condition in the definition is equivalent to having nonpositive (resp. negative) weights. This is because if we can find a subgroup acting with
only negative weights, then we can take its inverse and obtain only positive weights.\end{rem}

\medskip Observe that for every nonzero scalar $k$, $x$ is stable (resp. semistable) iff $kx$ is. So the same definitions of stability and semistability hold for
points of $\mathbb{P}(V)$. The definitions also descend to every $G$-invariant projective variety $X \subseteq \mathbb{P}(V)$; in fact, in~\cite{GIT} they are
defined for $X$ in terms of a $G$-equivariant line bundle $L$. When $L$ is ample, as in the case of the space under discussion in this paper, this reduces to the
above definition.

\medskip The importance of stability is captured in the following prior results:

\begin{prop}The space of all stable points, $X^{s}$, and the space of all semistable points, $X^{ss}$, are both open and $G$-invariant.\end{prop}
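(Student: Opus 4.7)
The plan is to handle $G$-invariance and openness separately, and within openness to treat $X^{ss}$ and $X^s$ individually. $G$-invariance of both loci follows essentially tautologically from Definition~\ref{sss}, openness of $X^{ss}$ is a one-line consequence of characterization (1), and openness of $X^s$ is the one step that requires real work and will be the main obstacle.

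For $G$-invariance I would use characterization (1). If $x \in X^{ss}$ and $s$ is a $G$-invariant homogeneous section with $s(x) \neq 0$, then for every $h \in G$, $s(hx) = (h^{\ast}s)(x) = s(x) \neq 0$, so $hx \in X^{ss}$. For stability the additional requirement is that $G\cdot x$ be closed, and since $G\cdot(hx) = G\cdot x$ this is manifestly a property of the orbit rather than of the chosen representative.

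For openness of $X^{ss}$, for each $x \in X^{ss}$ I would pick a $G$-invariant section $s$ witnessing semistability at $x$. The non-vanishing locus $X_{s} := \{y \in X : s(y) \neq 0\}$ is Zariski-open in $X$, contains $x$, and is contained in $X^{ss}$ since the same $s$ witnesses semistability at every $y \in X_{s}$. Hence $X^{ss}$ is a union of Zariski-opens.

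Openness of $X^s$ is the main obstacle. Here I would refine the argument using the stable version of characterization (1): $x \in X^s$ iff there is a $G$-invariant $s$ with $s(x) \neq 0$ such that the $G$-action on the affine open $X_{s}$ has all orbits closed and of finite stabilizer. The question then reduces to showing that on an affine $G$-variety $Y = X_{s}$, the locus of points with $0$-dimensional stabilizer and closed orbit is open. The first condition is open by upper semicontinuity of fiber dimension applied to the orbit map $G \to Y$, $h \mapsto h\cdot y$; the second is open because, restricted to the locus where orbits have maximal dimension $\dim G$, the closure of a non-closed orbit can contain only strictly lower-dimensional orbits, which form a proper closed subset. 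The technical heart, which I would invoke rather than reprove, is Mumford's construction of the good quotient $\pi : X^{ss} \to X^{ss}/\!/G$ together with the fact that $X^{s}/G$ is an open subscheme of it; pulling back exhibits $X^s = \pi^{-1}(X^s/G)$ as open in $X^{ss}$, and hence in $X$.
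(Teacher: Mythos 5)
The paper does not actually prove this proposition: it appears in section~\ref{GITrecap} among the ``prior results'' recalled from \cite{GIT}, so there is no in-paper argument to compare yours against. Judged on its own, your treatment of $G$-invariance and of the openness of $X^{ss}$ is correct and standard: non-vanishing loci of invariant sections are open, and closedness of $G\cdot x$ is a property of the orbit rather than the representative.

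The openness of $X^{s}$ is where your argument has a genuine gap. First, the step asserting that, within the open locus $U$ of points whose orbit has maximal dimension, the set of points with closed orbit is open is not established by the reason you give: knowing that the closure of each non-closed orbit meets the closed set $Z$ of points with smaller orbit dimension does not show that the union of the non-closed orbits is itself closed, which is what openness of its complement requires. The standard repair goes through the affine quotient $\pi_{0} : X_{s} \to X_{s}//G$: a good quotient sends closed invariant sets to closed sets, so $\pi_{0}(Z)$ is closed, and one checks that $y \in U$ has closed orbit if and only if $\pi_{0}(y) \notin \pi_{0}(Z)$ (the closure of $G\cdot y$ contains a unique closed orbit in the same fiber, and it equals $G\cdot y$ exactly when that fiber misses $Z$); hence the relevant locus is $U \cap \pi_{0}^{-1}\bigl(\pi_{0}(X_{s}) \setminus \pi_{0}(Z)\bigr)$, which is open. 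Second, your fallback --- invoking that $X^{s}/G$ is an open subscheme of $X^{ss}//G$ and pulling back --- is circular: that statement is derived in \cite{GIT} from the openness of $X^{s}$ (one needs $X^{s}$ open and saturated before the geometric quotient can be formed and identified inside $X^{ss}//G$), so it cannot be used to prove it. A minor further caution: the characterization of stability you use (existence of an invariant $s$ with $s(x) \neq 0$ such that the action on $X_{s}$ is closed, plus finiteness of the stabilizer) is itself one of the nontrivial equivalences of \cite{GIT}, and the finite-stabilizer clause, which your semicontinuity step needs, is not literally present in condition (2) of Definition~\ref{sss} as the paper states it.
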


\begin{thm}There exists a quotient $Y = X^{ss}//G$, called a good categorical quotient, with a natural map $\pi:X \to Y$, satisfying the following properties:

\begin{enumerate}
  \item $\pi$ is a $G$-equivariant map, where $G$ acts on $Y$ trivially.
  \item Every $G$-equivariant map $X \to Z$, where $G$ acts on $Z$ trivially, factors through $\pi$.
  \item $\pi$ is an open submersion.
  \item $\pi(x_{1}) = \pi(x_{2})$ iff the closures of $G\cdot x_{1}$ and $G\cdot x_{2}$ intersect.
  \item For every open $U \subseteq Y$, $\mathcal{O}_{U} = \mathcal{O}(\pi^{-1}(U))^{G}$.
\end{enumerate}

\noindent In addition, $Y$ is proper.\end{thm}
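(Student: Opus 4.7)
The plan is to execute the standard GIT construction, working locally on a cover by $G$-invariant affine opens and gluing.

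First I would cover $X^{ss}$ by invariant affines of the form $X_s = \{s \neq 0\}$, where $s$ ranges over $G$-invariant sections in $H^{0}(X, L^{\otimes n})^{G}$ for some $n \geq 1$. By the first condition of Definition~\ref{sss}, each semistable $x$ lies in such an $X_s$, and since $L$ is ample these $X_s$ are affine; finitely many cover $X^{ss}$. On each $X_s = \Spec A_s$ the ring $A_s$ inherits a $G$-action, and the invariant subring $A_s^{G}$ is finitely generated as a $k$-algebra by the geometric reductivity of $G$. I set $Y_s := \Spec A_s^{G}$ and $\pi_s \colon X_s \to Y_s$ to be the morphism dual to the inclusion $A_s^{G} \hookrightarrow A_s$.

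Next I would check that the $Y_s$ glue. On the overlap $X_s \cap X_{s'} = X_{ss'}$ the construction is functorial in the distinguished localization, so the $\pi_s$ agree and assemble into a scheme $Y$ with morphism $\pi \colon X^{ss} \to Y$. Property (1) is automatic, and property (5) is the defining equation on each $Y_s$. Property (2) follows because any $G$-equivariant $f \colon X^{ss} \to Z$ (trivial action on $Z$) locally corresponds to a ring map into $A_s$ whose image lies in $A_s^{G}$, so it factors uniquely through $\Spec A_s^{G}$; gluing gives the global factorization. Property (3), that $\pi$ is an open submersion, reduces to the affine case, where for a geometrically reductive action on an affine variety the invariant quotient is a surjective, open, categorical quotient. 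Property (4) is the orbit-separation statement: two orbits $G \cdot x_{1}$ and $G \cdot x_{2}$ in some $X_s$ with disjoint closures can be separated by a $G$-invariant regular function on $X_s$, which is exactly what geometric reductivity supplies (applied to an invariant function that is $0$ on one closed invariant subset and $1$ on the other at the level of some tensor power); conversely, any invariant function is forced to agree on intersecting orbit closures.

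For properness of $Y$, I would use the global construction: the graded ring $R := \bigoplus_{n \geq 0} H^{0}(X, L^{\otimes n})^{G}$ is finitely generated by geometric reductivity applied to the finitely generated graded ring $\bigoplus_{n} H^{0}(X, L^{\otimes n})$, and the $Y_s$ glue to $\Proj R$; thus $Y$ is projective, hence proper.

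The technical heart, and the step I expect to be the main obstacle, is the pair of invariant-theoretic facts underlying (4) and the finite generation: given a geometrically reductive $G$ acting on an affine variety, (a) the invariant subring is finitely generated, and (b) disjoint closed $G$-invariant subsets are separated by an invariant. In characteristic zero these follow from complete reducibility, while in positive characteristic they require Haboush's theorem; once these are in hand, the rest of the construction is formal bookkeeping.
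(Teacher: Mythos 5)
The paper states this theorem purely as background recalled from \cite{GIT} and gives no proof of its own, so there is no in-paper argument to compare against; your sketch is the standard Mumford--Nagata construction (invariant affine charts $\Spec A_{s}^{G}$ for invariant sections $s$, glued into $\Proj \bigoplus_{n} H^{0}(X, L^{\otimes n})^{G}$, with properness from projectivity of the latter) and is essentially correct. One small attribution point: since geometric reductivity is already the hypothesis, the finite generation of invariants and the separation of disjoint closed invariant sets are Nagata's theorems and do not require Haboush's theorem, which is only needed to know that reductive groups are geometrically reductive in positive characteristic.
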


\begin{thm}There exists a quotient $Z = X^{s}//G$, called a good geometric quotient, with a natural map $\pi:X \to Z$ satisfying all enumerated conditions of a good
categorial quotient, as well as the following:

\begin{enumerate}
  \item $\pi(x_{1}) = \pi(x_{2})$ iff $G\cdot x_{1} = G\cdot x_{2}$.
  \item $Z$ is naturally an open subset of $X^{ss}//G$.
\end{enumerate}\end{thm}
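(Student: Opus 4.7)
The plan is to deduce the existence of $Z$ from the categorical quotient $\pi: X^{ss} \to Y = X^{ss}//G$ already constructed in the previous theorem, by restricting $\pi$ to the stable locus and checking that the resulting map inherits and improves the listed properties. Concretely, I would set $Z := \pi(X^{s})$ and the natural map $Z \hookrightarrow Y$ would realize condition (2) automatically, so the work concentrates on condition (1) together with the claim that $Z$ is open and that $\pi|_{X^{s}}$ really is a quotient map.

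First I would verify that $X^{s}$ is $\pi$-saturated, i.e. $\pi^{-1}(Z) = X^{s}$. This is the crucial step and uses the characterization of stable points by closed orbits together with the characterization of the categorical quotient by intersecting orbit closures. If $x \in X^{s}$ and $\pi(x) = \pi(x')$, then $\overline{G \cdot x} \cap \overline{G \cdot x'} \neq \emptyset$; since $G \cdot x$ is already closed, it is the unique closed orbit contained in $\overline{G \cdot x'}$. Because stable orbits have maximal dimension (the stabilizer being finite by Theorem~\ref{finite} in our setting, and by the general Hilbert--Mumford argument in the abstract case), $G \cdot x'$ cannot properly degenerate onto $G \cdot x$, so $G \cdot x' = G \cdot x$; in particular $x' \in X^{s}$. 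Once saturation is established, condition (1) falls out immediately, since $\pi(x_{1}) = \pi(x_{2})$ with both $x_{i} \in X^{s}$ forces $\overline{G \cdot x_{1}} \cap \overline{G \cdot x_{2}} \neq \emptyset$, which by closedness of both orbits gives $G \cdot x_{1} = G \cdot x_{2}$.

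Next I would show that $Z$ is open in $Y$ and that the restricted map $\pi|_{X^{s}}: X^{s} \to Z$ is itself a good categorical quotient. Openness of $Z$ is a direct consequence of openness of $X^{s}$ in $X^{ss}$ combined with the fact that $\pi$ is an open submersion (property (3) of the categorical quotient). Saturation of $X^{s}$ gives $X^{s} = \pi^{-1}(Z)$, so the restriction of $\pi$ is surjective onto $Z$, $G$-equivariant, and still an open submersion; the universal property, the identification of regular functions on opens with $G$-invariants, and properness of fibers over $Z$ all restrict cleanly because they are local on the base.

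The main obstacle is the saturation step, or more precisely the orbit-dimension argument showing that a stable point and a strictly semistable point cannot have the same image in $Y$. I would handle this either by invoking the Hilbert--Mumford numerical criterion directly (using condition (3) of Definition~\ref{sss} to distinguish stable from merely semistable points via the existence of one-parameter subgroups with a weight equal to zero) or, in the ample line-bundle setting relevant to this paper, by reducing to the standard statement in~\cite{GIT} that the stable locus is the preimage under $\pi$ of the open set of points whose fiber consists of a single closed orbit of maximal dimension. Everything else is formal extraction from the categorical quotient already in hand.
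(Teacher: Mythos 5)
The paper does not prove this theorem: it is listed among the standard ``prior results'' of geometric invariant theory recalled in Section~\ref{GITrecap} and is simply quoted from \cite{GIT}, so there is no in-paper argument to compare against. Your proposal is the standard textbook proof — saturation of $X^{s}$ via the unique closed orbit in each fiber of $\pi$ together with the fact that a stable orbit has maximal dimension and the boundary of an orbit consists of strictly smaller orbits — and it is correct; the only blemishes are that Theorem~\ref{finite} concerns stabilizers in $\Hom_{d}^{n}$ rather than the abstract $X$ of this statement (your fallback to the general Hilbert--Mumford/finite-stabilizer argument is the right one), and that ``properness of fibers over $Z$'' is not among the enumerated conditions and need not be checked.
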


\begin{thm}On $X^{s}$, the dimension of the stabilizer group $\Stab_{G}(x)$ is constant.\end{thm}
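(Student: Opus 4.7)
The plan is to prove the somewhat stronger assertion that $\Stab_{G}(x)$ is in fact finite, hence of constant dimension zero, for every $x \in X^{s}$. I would proceed in three steps: first show that the orbit $G \cdot x$ is an affine variety; then invoke Matsushima's theorem to conclude $\Stab_{G}(x)$ is reductive; and finally use the strict Hilbert--Mumford criterion for stability to rule out any positive-dimensional torus in the stabilizer.

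\medskip For the first step, I would recall from condition (1) of Definition~\ref{sss} that $X^{ss}$ is covered by the $G$-invariant affine opens $D_{+}(s) = \{y : s(y) \neq 0\}$, as $s$ ranges over $G$-invariant homogeneous sections. Picking any such $s$ with $s(x) \neq 0$, the orbit $G \cdot x$ is contained in $D_{+}(s)$ because $s$ is $G$-invariant, and it is closed in $X^{ss}$ by condition (2). Hence $G \cdot x$ is a closed subvariety of the affine variety $D_{+}(s)$, so it is itself affine.

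\medskip For the second step, Matsushima's theorem asserts that for a reductive group $G$ and a closed subgroup $H$, the homogeneous space $G/H$ is affine if and only if $H$ is reductive. Since $G \cdot x \cong G/\Stab_{G}(x)$ as $G$-varieties, the affineness of the orbit forces $\Stab_{G}(x)$ to be reductive.

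\medskip For the third step, suppose for contradiction that $\Stab_{G}(x)^{0}$ has positive dimension. Being a connected reductive group of positive dimension, it contains a nontrivial maximal torus, and hence a nontrivial one-parameter subgroup $T \colon \mathbb{G}_{m} \to \Stab_{G}(x)^{0} \subseteq G$. Because $T$ fixes $x$, any lift $v \in V$ of $x$ is a $T$-eigenvector of a single weight $\lambda$, so the set of $T$-weights occurring in $v$ is the singleton $\{\lambda\}$, which certainly cannot contain both a strictly positive and a strictly negative integer. This directly contradicts condition (3) of Definition~\ref{sss} in its stable form. Hence $\Stab_{G}(x)^{0}$ is trivial, $\Stab_{G}(x)$ is finite, and its dimension is identically zero on $X^{s}$. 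The main obstacle in this argument is securing the affineness of the orbit and the applicability of Matsushima's theorem in the generality needed; once these standard inputs are in hand, the reductive-versus-positive-weight dichotomy closes the argument essentially tautologically from the paper's own Hilbert--Mumford criterion.
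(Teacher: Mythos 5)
The paper offers no proof of this statement at all: it appears in Section~\ref{GITrecap} as one of several results recalled from standard geometric invariant theory, so there is nothing internal to compare your argument against. Your argument is a correct and essentially standard proof of the stronger assertion that $\Stab_{G}(x)$ is finite for $x \in X^{s}$ (which is consistent with Theorem~\ref{finite} in the paper's special case). The logic is sound: closedness of the orbit inside the affine invariant open $X_{s} = \{s \neq 0\}$ gives affineness of $G\cdot x \cong G/\Stab_{G}(x)$; Matsushima forces reductivity of the stabilizer, which is genuinely needed here because the one-parameter-subgroup argument alone cannot exclude a unipotent piece such as $\mathbb{G}_{a}$ (which admits no nontrivial one-parameter subgroups); and a one-parameter subgroup fixing $x$ acts on any lift $v$ through a single character, so the weight set is a singleton and the strict form of condition (3) of Definition~\ref{sss} fails. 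Two caveats you should make explicit: since the paper works over arbitrary base fields, you need the characteristic-free version of Matsushima's criterion (Richardson/Haboush), applied to the scheme-theoretic stabilizer so that $G\cdot x \cong G/\Stab_{G}(x)$ holds without separability assumptions; and your conclusion relies on reading ``stable'' via the strict numerical criterion of condition (3), since the closed-orbit condition (2) alone does not force finite stabilizers (the paper's claimed equivalence of the three conditions silently assumes the properly-stable convention). A more self-contained alternative is simply to cite Mumford's theorem that the strict numerical criterion characterizes \emph{properly} stable points, for which zero-dimensionality of the stabilizer is part of the definition; your route has the advantage of actually explaining why that is so.
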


\medskip Returning to our case of self-maps of $\mathbb{P}^{n}$, we write the stable and semistable spaces for the conjugation action as $\Hom_{d}^{n, s}$ and
$\Hom_{d}^{n, ss}$. This involves a fair amount of abuse of notation, since those two spaces are open subvarieties of $\mathbb{P}^{N}$ and in fact properly contain
$\Hom_{d}^{n}$, which consists only of regular maps.

\medskip In \cite{Lev} we proved the fact that $\Hom_{d}^{n} \subsetneq \Hom_{d}^{n, s}$ by describing $\Hom_{d}^{n, s}$ and $\Hom_{d}^{n, ss}$ more or less
explicitly. We will recap the results, which are very technical but help us answer the question of when we can obtain a trivial bundle class in the semistable
reduction problem and when we cannot.

\medskip We use the Hilbert-Mumford criterion, which is the last condition in Definition~\ref{sss}. In more explicit terms, the criterion for semistability (resp.
stability) states that for every one-parameter subgroup $T \leq \SL(n+1)$, the action of $T$ on $\varphi$ can be diagonalized with eigenvalues $t^{a_{I}}$ and at
least one $a_{I}$ is nonpositive (resp. negative). Now, assume by conjugation that this one-parameter subgroup is in fact diagonal, with diagonal entries
$t^{a_{0}}, \ldots, t^{a_{n}}$, and that $a_{0} \geq \ldots \geq a_{n}$; we may also assume that the $a_{i}$'s are coprime, as dividing throughout by a common
factor would not change the underlying group. Note also that $a_{0} + \ldots + a_{n} = 0$. Our task is made easy by the fact that our standard coordinates for
$\mathbb{A}^{N+1}$ are the monomials, on which $T$ already acts diagonally. Throughout this analysis, we fix $\mathbf{a} = (a_{0}, \ldots, a_{n})$, and similarly
for $\mathbf{x}$ and $\mathbf{d}$.

\medskip Now, $T$ acts on the $x_{0}^{d_{0}}\ldots x_{n}^{d_{n}}$ monomial of the $i$th polynomial, $\varphi_{i}$, with weight $a_{i} - \mathbf{a}\cdot\mathbf{d}.$
A map $\varphi \in \mathbb{P}^{N}$ is unstable (resp. not stable) iff, after conjugation, there exists a choice of $a_{i}$'s such that whenever the
$\mathbf{x^{d}}$-coefficient of $\varphi_{i}$ satisfies $\mathbf{a}\cdot\mathbf{d} \leq a_{i}$ (resp. $<$), it is equal to zero.

\begin{rem}While in principle there are infinitely many possible $T$'s, parametrized by a hyperplane in $\mathbb{P}^{n}(\mathbb{Q})$, in practice there are up to
conjugation only finitely many. This is because each diagonal $T$ imposes conditions of the form ``the $\mathbf{x^{d}}$-coefficient of $\varphi_{i}$ is zero,'' and
there are only finitely many such conditions. Thus the stable and semistable spaces are indeed open in $\mathbb{P}^{N}$.\end{rem}

\begin{rem}The conjugation conditions we have chosen for $T$ are such that the conditions they impose for $\varphi$ to be unstable (or merely not stable) are the most
stringent on $\varphi_{n}$ and least stringent on $\varphi_{0}$, and are the most stringent on monomials with high $x_{0}$-degrees and least stringent on monomials
with high $x_{n}$-degrees.\end{rem}

\medskip If $n = 1$, we have a simpler description, due to Silverman \cite{Sil96}:

\begin{thm}$\varphi \in \mathbb{P}^{N}$ is unstable (resp. not stable) iff it is equivalent under coordinate change to a map $(a_{0}x^{d} + \ldots +
a_{d}y^{d})/(b_{0}x^{d} + \ldots + b_{d}y^{d})$, such that:

\begin{enumerate}
  \item $a_{i} = 0$ for all $i \leq (d-1)/2$ (resp. $<$).
  \item $b_{i} = 0$ for all $i \leq (d+1)/2$ (resp. $<$).
\end{enumerate}\end{thm}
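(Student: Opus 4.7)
The plan is to apply the Hilbert--Mumford numerical criterion (the third clause of Definition~\ref{sss}) in the especially transparent setting $n=1$. Every nontrivial one-parameter subgroup of $\SL(2)$ factors through a maximal torus, and any two maximal tori are conjugate, so after a coordinate change---which is exactly the ``equivalent under coordinate change'' permitted by the theorem---it suffices to test (semi)stability against the single one-parameter subgroup $T(t)=\operatorname{diag}(t,t^{-1})$. Reparametrizing $t\mapsto t^{\alpha}$ does not change the image subgroup, and inverting $t$ merely negates every weight, so this single $T$ captures all the Hilbert--Mumford data up to the permitted conjugation.

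Next I would carry out the weight computation. Write $\varphi_{0}=\sum_{i}a_{i}x^{d-i}y^{i}$ and $\varphi_{1}=\sum_{i}b_{i}x^{d-i}y^{i}$. Conjugation by $T$ substitutes $(x,y)\mapsto(t^{-1}x,ty)$ inside $\varphi$ and then rescales the first output coordinate by $t$ and the second by $t^{-1}$. Collecting exponents, the coefficient $a_{i}$ gets multiplied by $t^{2i-d+1}$ and $b_{i}$ by $t^{2i-d-1}$, so the $T$-weights on the corresponding monomials are $2i-d+1$ and $2i-d-1$, respectively. This is precisely the specialization of the general weight formula $a_{i}-\mathbf{a}\cdot\mathbf{d}$ recalled in Section~\ref{GITrecap} to the $n=1$ torus.

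I would then feed this into the criterion. Up to replacing $T$ by its inverse, unstability is equivalent to the existence of a coordinate change after which every nonzero monomial of $\varphi$ carries strictly positive $T$-weight. The inequality $2i-d+1>0$ rearranges to $i>(d-1)/2$, so the nonzero $a_{i}$'s must satisfy this, i.e.\ $a_{i}=0$ for all $i\leq(d-1)/2$; analogously $2i-d-1>0$ yields $b_{i}=0$ for $i\leq(d+1)/2$. For the non-stability variant, strict positivity is replaced by nonnegativity in every weight condition, which transforms $2i-d+1\geq 0$ into $i\geq(d-1)/2$ and hence $a_{i}=0$ for $i<(d-1)/2$, and similarly $b_{i}=0$ for $i<(d+1)/2$. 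This yields exactly the ``resp.\ $<$'' clauses.

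The argument is a bookkeeping exercise once the correct 1-PSG is isolated, so the only conceptual step is the reduction to the single standard $T$, which rests on the torus structure of $\SL(2)$. The main hazard will be sign-convention consistency: the choice of whether conjugation is written $A\varphi A^{-1}$ or $A^{-1}\varphi A$, and whether one tests the limit $t\to 0$ or $t\to\infty$, negates every weight uniformly. This ambiguity is harmless, since inverting $T$ is permitted, but it must be tracked carefully to ensure the asymmetry between the ``$+1$'' and ``$-1$'' in the two weight formulas lands on the right component $\varphi_{0}$ versus $\varphi_{1}$ as stated.
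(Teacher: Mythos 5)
Your proof is correct and takes essentially the same approach the paper relies on: the paper quotes this as Silverman's result without reproving it, but your computation is exactly the $n=1$ specialization of the Hilbert--Mumford weight recipe recalled in Section~\ref{GITrecap} (weight $a_{i}-\mathbf{a}\cdot\mathbf{d}$ for the diagonal torus), with the signs arranged so that the destabilizing subgroup is $\operatorname{diag}(t,t^{-1})$ acting with all positive weights. The reduction to a single standard one-parameter subgroup via conjugacy of maximal tori in $\SL(2)$, and the bookkeeping distinguishing the strict and non-strict inequalities for instability versus non-stability, are both handled correctly.
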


\medskip The description for $n = 1$ can be thought of as giving a dynamical criterion for stability and semistability. A point $\varphi \in \mathbb{P}^{N}$ is
unstable if there exists a point $x \in \mathbb{P}^{1}$ where $\varphi$ has a bad point of degree more than $(d+1)/2$, or $\varphi$ has a bad point of degree more
than $(d-1)/2$ where it in addition has a fixed point. Following Rahul Pandharipande's unpublished reinterpretation of \cite{Sil96}, we define ``bad point'' as a
vertical component of the graph $\Gamma_{\varphi} \subseteq \mathbb{P}^{1} \times \mathbb{P}^{1}$, and ``fixed point'' as a fixed point of the unique non-vertical
component of $\Gamma_{\varphi}$. When $n = 1, d = 2$, this condition reduces to having a fixed point at a bad point, or alternatively a repeated bad point.

\medskip The conditions for higher $n$ are not as geometric. However, if we interpret fixed points liberally enough, there are still strong parallels with the $n =
1$ case. One can show that the unstable space for $n = 2$ and $d = 2$ consists of two irreducible components, which roughly generalize the $n = 1, d = 2$ condition
of having a fixed point at a bad point; in this case, one needs to define a limit of the value of $\varphi(x)$ as $x$ approaches the bad point, though this limit
can be defined purely in terms of degrees of polynomials, without needing to resort to a specific metric on the base field.

\section{Examples of Nontrivial Bundles}\label{exbad}

\noindent The space $\Rat_{2} = \Hom_{2}^{1}$ and its quotient $\mathrm{M}_{2}$ have been analyzed with more success than the larger spaces, yielding the following
prior structure result \cite{D1C} \cite{Sil96}:

\begin{thm}\label{Sil}$\mathrm{M}_{2} = \mathbb{A}^{2}$; $\mathrm{M}_{2}^{s} = \mathrm{M}_{2}^{ss} = \mathbb{P}^{2}$. The first two elementary symmetric polynomials
in the multipliers of the fixed points realize both isomorphisms.\end{thm}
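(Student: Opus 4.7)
The plan is to exhibit both isomorphisms via the multiplier map $\varphi \mapsto (\sigma_{1}(\varphi), \sigma_{2}(\varphi))$, where $\sigma_{i}$ is the $i$th elementary symmetric polynomial in the multipliers $\lambda_{1}, \lambda_{2}, \lambda_{3}$ at the three fixed points of $\varphi$. These symmetric functions are $\PGL(2)$-invariant since conjugation permutes fixed points and preserves multipliers, so they descend to regular functions on $\mathrm{M}_{2}$.

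For the affine statement $\mathrm{M}_{2} \cong \mathbb{A}^{2}$, I would first recall that by B\'ezout applied to $\Gamma_{\varphi} \cap \Delta$ a degree-$2$ self-map of $\mathbb{P}^{1}$ has exactly three fixed points counted with multiplicity, and the holomorphic index formula yields
\[
\frac{1}{1-\lambda_{1}} + \frac{1}{1-\lambda_{2}} + \frac{1}{1-\lambda_{3}} = 1
\]
whenever no $\lambda_{i}$ equals $1$. Clearing denominators rearranges this to $\sigma_{3} = \sigma_{1} - 2$, so the multiplier triple $(\sigma_{1}, \sigma_{2}, \sigma_{3})$ always lies on a fixed plane in $\mathbb{A}^{3}$ and the first two coordinates determine the third, factoring the multiplier map as $\mathrm{M}_{2} \to \mathbb{A}^{2}$. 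For injectivity I would place two of the fixed points at $0$ and $\infty$ by a M\"obius transformation and use the remaining torus in $\PGL(2)$ to normalize one coefficient of $\varphi$, so that the remaining data is visibly determined by $(\sigma_{1}, \sigma_{2})$. For surjectivity, given $(\sigma_{1}, \sigma_{2})$ I would set $\sigma_{3} = \sigma_{1} - 2$, solve for a multiplier triple with these symmetric functions, and write an explicit map fixing $0, 1, \infty$ with those multipliers. Since $\mathrm{M}_{2}$ is normal and irreducible of dimension $\dim\Hom_{2}^{1} - \dim\PGL(2) = 5 - 3 = 2$, a bijective morphism to the smooth $\mathbb{A}^{2}$ is an isomorphism by Zariski's main theorem.

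For the projective statement, I would invoke Theorem~\ref{containment}, which gives $\Hom_{2}^{1, s} = \Hom_{2}^{1, ss}$ because $n = 1$ and $d$ is even, so $\mathrm{M}_{2}^{s} = \mathrm{M}_{2}^{ss}$ and the latter is a proper variety containing $\mathrm{M}_{2}$ as an open dense subset with a one-dimensional boundary. Then I would extend $(\sigma_{1}, \sigma_{2})$ to the projective morphism $(\sigma_{1} : \sigma_{2} : 1) \colon \mathrm{M}_{2}^{ss} \to \mathbb{P}^{2}$, check that this actually extends across the semistable boundary identified by the Hilbert-Mumford analysis of Section~\ref{GITrecap}, and verify that the boundary maps bijectively onto the line at infinity. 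A bijective morphism of normal proper surfaces is then an isomorphism.

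The main obstacle I anticipate is precisely this boundary control: along a one-parameter approach to the semistable boundary the individual multipliers $\lambda_{i}$ diverge, since a fixed point coalesces with a bad point of $\varphi$ and loses its classical derivative, so one must compute the common growth rate of $\sigma_{1}$ and $\sigma_{2}$ in order to conclude that their projective ratio extends continuously, and then identify the resulting boundary values with the line at infinity in $\mathbb{P}^{2}$. A secondary subtlety is handling the non-generic fixed-point configurations where multipliers coincide or equal $1$, which requires a limiting argument to make the index-formula derivation valid everywhere rather than only on a generic locus.
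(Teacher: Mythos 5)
The paper does not prove Theorem~\ref{Sil}: it is quoted as a prior structure result with citations to \cite{D1C} and \cite{Sil96}, so there is no internal proof to compare your argument against. Your sketch reconstructs the standard Milnor--Silverman argument, and its skeleton is sound: the index relation $\sum_{i}(1-\lambda_{i})^{-1}=1$ does rearrange to $\sigma_{3}=\sigma_{1}-2$, the multiplier map does realize $\mathrm{M}_{2}\cong\mathbb{A}^{2}$, and the identification $\mathrm{M}_{2}^{s}=\mathrm{M}_{2}^{ss}$ correctly follows from Theorem~\ref{containment} since $n=1$ and $d=2$ is even. Two places where your outline is thinner than the actual proofs in the literature deserve flagging. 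First, injectivity of $(\sigma_{1},\sigma_{2})$ is the genuinely hard step: the normalization ``put two fixed points at $0$ and $\infty$ and use the torus'' yields the normal form $z\mapsto z(z+\lambda_{1})/(\lambda_{2}z+1)$ only when $\lambda_{1}\lambda_{2}\neq 1$, and the degenerate configurations (coincident fixed points, multipliers equal to $1$, pairs with $\lambda_{1}\lambda_{2}=1$) require the separate case analysis that occupies most of Milnor's proof; your closing remark acknowledges this but does not resolve it. Second, ``a bijective morphism of normal varieties is an isomorphism'' requires characteristic zero (or separability) --- Frobenius is the standard counterexample --- whereas \cite{Sil96} proves the statement over $\mathbb{Z}$ by exhibiting $\sigma_{1},\sigma_{2}$ as generators of the invariant ring, a stronger and more uniform route. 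Your boundary analysis for the projective statement (one multiplier diverges, the other two tend to a pair $\mu,\mu^{-1}$, and $\sigma_{2}/\sigma_{1}\to\mu+\mu^{-1}$ parametrizes the line at infinity) is the right computation and matches how the identification $\mathrm{M}_{2}^{ss}\cong\mathbb{P}^{2}$ is established.
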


\medskip Recall that within $\mathbb{P}^{N} = \mathbb{P}^{5}$, a map $(a_{0}x^{2} + a_{1}xy + a_{2}y^{2})/(b_{0}x^{2} + b_{1}xy + b_{2}y^{2})$ is unstable iff it is
in the closure of the $\PGL(2)$-orbit of the subvariety $a_{0} = b_{0} = b_{1} = 0$. In other words, it is unstable iff there the map is degenerate and has a double
bad point, or a fixed point at a bad point.

\begin{defn}\label{poly}A map on $\mathbb{P}^{1}$ is a polynomial iff there exists a totally invariant fixed point. Taking such a point to infinity turns the map
into a polynomial in the ordinary sense. In $\Rat_{d}$, or generally in $\mathbb{P}^{N} = \mathbb{P}^{2d+1}$, a map is polynomial iff it is in the closure of the
$\PGL(2)$-orbit of the subvariety defined by zeros in all coefficients in the denominator except the $y^{d}$-coefficient.\end{defn}

\begin{rem}A totally invariant fixed point is not necessarily a totally fixed point. A totally invariant fixed point is one that is totally ramified. A totally
fixed point is the root of the fixed point polynomial when it is unique, i.e. when the polynomial is a power of a linear term. In fact by an easy computation, a map
has a totally invariant, totally fixed point $x$ iff it is degenerate linear with a multiplicity-$d-1$ bad point at $x$, in which case it is necessarily
unstable.\end{rem}

\medskip The polynomial maps define a curve in $\mathrm{M}_{2}^{ss}$; we will show,

\begin{prop}\label{Example}The polynomial curve in $\mathrm{M}_{2}^{ss}$ only satisfies semistable reduction with nontrivial bundles.\end{prop}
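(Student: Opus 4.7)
\emph{Proof plan.} The strategy is to argue by contradiction: suppose a complete curve $D \subseteq \Hom_{2}^{1,ss}$ maps finite-to-one onto the polynomial curve $C$, and deduce a contradiction. Combined with the observation that the naive partial lift $c \mapsto [x^{2}+cy^{2}:y^{2}]$ on $C$ minus its boundary point degenerates to the unstable point $[y^{2}:0]\in\mathbb{P}^{5}$ at $c=\infty$ (so Proposition~\ref{multi} applied to this $U$ produces a nontrivial bundle class from it), this establishes that the polynomial curve admits only nontrivial bundle lifts.

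First, since the polynomial locus in $\Hom_{2}^{1,ss}$ is $\PGL(2)$-invariant and closed by Definition~\ref{poly} and $C\setminus\{P_{\infty}\}$ is dense in $C$, the hypothesis $\pi(D)=C$ forces every $\varphi\in D$ to be polynomial, so each carries a generically unique totally invariant fixed point. This yields a morphism $\tau\colon D\to\mathbb{P}^{1}$, and since $D$ is complete and irreducible, $\tau(D)$ is either a single point or all of $\mathbb{P}^{1}$.

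The constant-$\tau$ case is the core calculation. After a $\PGL(2)$-conjugation of $D$ I may assume $\tau\equiv\infty$, so every $\varphi\in D$ has the form $[ax^{2}+bxy+\gamma y^{2}:\delta y^{2}]$ and $D$ lies in the projective slice $\mathbb{P}^{3}=\{[a:b:\gamma:\delta]\}$. The unstable criterion from section~\ref{GITrecap} forces $a\ne 0$ everywhere on $D$, because $a=0$ lets $y$ be canceled out of the numerator and produces a degenerate degree-one map that is unstable; it likewise forces $\delta\ne 0$, because $\delta=0$ makes $\varphi$ the unstable constant map to $\infty$. Using $\sigma_{1}\equiv 2$ on the polynomial line, a short fixed-point multiplier calculation gives the second Milnor coordinate as $\sigma_{2}=(4a\gamma+2b\delta-b^{2})/\delta^{2}$, which is a regular morphism $\{a\delta\ne 0\}\to\mathbb{A}^{1}$. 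On the complete curve $D$ this must be constant, so $\pi(D)$ is a single point of $C$, contradicting surjectivity.

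The main obstacle is the surjective case $\tau(D)=\mathbb{P}^{1}$, which I would reduce to the constant case by a base change. Form $\tilde{D}:=D\times_{\mathbb{P}^{1}}\PGL(2)$ using $\tau$ and the orbit map $\PGL(2)\to\PGL(2)/B=\mathbb{P}^{1}$, where $B$ is the Borel stabilizing $\infty$; on $\tilde{D}$ the evaluation $(x,g)\mapsto g^{-1}\varphi_{x}g$ takes values in the fixed-point-at-$\infty$ slice, so the previous $\sigma_{2}$-boundedness argument applies to any complete subcurve of its image. The delicate point is that $\tilde{D}\to D$ is a $B$-bundle and $B$ is not projective, so one must check that the conjugating $g$ never introduces a pole forcing the coefficients $a$ or $\delta$ to vanish and thus pushing $\varphi$ out of $\Hom_{2}^{1,ss}$; this requires a local analysis at a point of $\tau^{-1}(\infty)$, expanding $g$ and $\varphi$ in a local parameter and tracking orders of vanishing. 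Once this is settled, the constant-case argument applies and no trivial-bundle lift exists; together with the nontrivial-bundle lifts furnished by Proposition~\ref{multi} this completes the proof.
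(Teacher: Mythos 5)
Your overall architecture (split on whether the totally invariant fixed point varies along $D$) parallels the paper's, but both halves of your argument have problems; the second half is a genuine gap.

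In the constant-$\tau$ case, your claim that $\delta\neq 0$ on $D$ is false. The map $[x^{2}-xy:0]$ lies in your slice with $\delta=0$ and $a=1$, and it is \emph{semistable}: a direct check against Silverman's normal form $a_{0}=b_{0}=b_{1}=0$ (or the Hilbert--Mumford weights) shows that $(f:0)$ is unstable only when $f$ has a double root or vanishes at $(1:0)$, neither of which holds for $f=x(x-y)$. Indeed the paper identifies exactly this map as the semistable representative over the boundary point $P_{\infty}$ of $C$, so any complete $D$ in the slice surjecting onto $C$ \emph{must} contain a point with $\delta=0$. Consequently $\sigma_{2}=(4a\gamma+2b\delta-b^{2})/\delta^{2}$ has a pole on $D$ and your ``regular on a complete curve, hence constant'' step collapses. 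The case is nonetheless easy to repair, and the paper does it in one line: since $a=0$ in the slice puts $\varphi$ literally in the unstable normal form, a complete curve in the slice $\mathbb{P}^{3}$ avoiding the unstable locus would have to miss the hyperplane $\{a=0\}$, which is impossible. You already established $a\neq 0$; you should stop there rather than pass through $\sigma_{2}$.

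In the surjective case you have not given a proof. The fiber product $D\times_{\mathbb{P}^{1}}\PGL(2)$ is a $B$-torsor over $D$ with $B$ non-proper, so the conjugated family you want to feed into the constant-case argument is only defined on an open subset of $D$ and need not close up to a complete curve in the slice; the ``delicate point'' you flag is precisely where the content lies, and deferring it to an unspecified local analysis leaves the main case open. The paper avoids this entirely: it sends each polynomial map to the pair (totally invariant point, elementary symmetric functions of the other two fixed points) in $\mathbb{P}^{1}\times\mathbb{P}^{2}$, observes that $a_{0}=0$ (hence instability) occurs exactly when the totally invariant point is a repeated root of the fixed-point cubic, i.e.\ on the pullback of the divisor $ax^{2}+bxy+cy^{2}=0$, and notes that this divisor is ample on $\mathbb{P}^{1}\times\mathbb{P}^{2}$, so every complete curve meets it. That single intersection-theoretic step handles the non-constant case (and subsumes the constant one). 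I recommend you adopt it, or at minimum carry out the pole analysis you postponed before claiming the reduction works.
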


\begin{proof}First, note that in $\mathbb{P}^{5}$, the polynomial maps are those that can be conjugated to the form $(a_{0}x^{2} + a_{1}xy +
a_{2}y^{2})/b_{2}y^{2}$, in which case the totally invariant fixed point is $\infty = (1:0)$. We will call the polynomial map locus $X$. If $a_{0} = 0$ then the map
is unstable; we will show that every curve in $X$ contains a map for which $a_{0} = 0$. Clearly, the set of all maps with a given totally invariant fixed point is
isomorphic to $\mathbb{P}^{3}$, and the unstable locus within it is isomorphic to $\mathbb{P}^{2}$ as a linear subvariety, so for there to be any hope of a trivial
bundle, a curve in $X$ cannot lie entirely over one totally invariant point.

\medskip Now, the fixed point equation for a map of the form $f/g$ is $fy - gx$; the homogeneous roots of this equation are the fixed points, with the correct
multiplicities. For our purposes, when the totally invariant point is $\infty$, the fixed point equation is $a_{0}x^{2}y + (a_{1} - b_{2})xy^{2} + a_{2}y^{3}$. We
get that $a_{0} = 0$ iff the totally invariant point is a repeated root of the fixed point equation.

\medskip There exists a map from $X$ to $\mathbb{P}^{1} \times \mathbb{P}^{2}$, mapping $\varphi$ to its totally invariant point in $\mathbb{P}^{1}$, and to the two
elementary symmetric polynomials in the two other fixed points in $\mathbb{P}^{2}$. Write $(x:y)$ for the image in $\mathbb{P}^{1}$ and $(a:b:c)$ for the image in
$\mathbb{P}^{2}$. Now $(x:y)$ is a repeated root if $ax^{2} + bxy + cy^{2} = 0$. The equation defines an ample divisor, so every curve in $\mathbb{P}^{1} \times
\mathbb{P}^{2}$ will meet it. Finally, a curve in $X$ maps either to a single point in $\mathbb{P}^{1} \times \mathbb{P}^{2}$, in which case it must contain points
with $a_{0} = 0$ as above, or to a curve, in which case it intersects the divisor $ax^{2} + bxy + cy^{2} = 0$. In both cases, the curve contains unstable points.
Thus there is no global semistable curve $D$ in $\Hom_{d}^{n, ss}$ mapping down to $C$.\end{proof}

\medskip Note that in the above proof, maps conjugate to $x^{2}$ have two totally invariant points, so a priori the map from $X$ to $\mathbb{P}^{1} \times
\mathbb{P}^{2}$ is not well-defined at them. However, for any curve $D$ in $X$, there is a well-defined completion of this map, whose value at $x^{2}$ on the
$\mathbb{P}^{1}$ factor is one of the two totally invariant points. Thus this complication does not invalidate the above proof.

\medskip Let us now compute the vector bundle classes that do occur for the polynomial curve. We work with the description $x^{2} + c$, which yields an affine curve
that maps one-to-one into $C$, missing only the point at infinity, which is conjugate to $\frac{x^{2} - x}{0}$. To hit the point at infinity, we choose the
alternative parametrization $cx^{2} - cx + 1$, which, when $c = \infty$, corresponds to the unique (up to conjugation) semistable degenerate constant map. For any
$c$, this map is conjugate to $x^{2} - cx + c$ and thence $x^{2} + c/2 - c^{2}/4$, using the transition function $[c, -1/2; 0, 1]$. Thus the bundle splits as
$\mathcal{O} \oplus \mathcal{O}(1)$.

\medskip This bundle depends on the choice of $D$. In fact, if we choose another parametrization for $D$, for example $c^{2}x^{2} - c^{2}x + 1$, then the transition
function $[c^{2}, -1/2; 0, 1]$, which leads to the bundle $\mathcal{O} \oplus \mathcal{O}(2)$. This is not equivalent to $\mathcal{O} \oplus \mathcal{O}(1)$. This
then leads to the question of which classes of bundles can occur over each $C$. In the example we have just done, the answer is every nontrivial class: for every
positive integer $m$, we can use $c^{m}x^{2} - c^{m}x + 1$ as a parametrization, leading to $\mathcal{O} \oplus \mathcal{O}(m)$, which exhausts all nontrivial
projective bundle classes.

\medskip Recall the result of Proposition~\ref{multi}:

\begin{prop}Suppose $C$ is isomorphic to $\mathbb{P}^{1}$, and there exists $U \subseteq \Hom_{d}^{n, ss}$ mapping finite-to-one into $C$ such that $U$ is a
projective curve minus a point. Then there are always infinitely many possible classes: if the class of $U$ is thought of as splitting as $\mathbf{P}(\mathcal{E}) =
\mathcal{O} \oplus \mathcal{O}(m_{1}) \oplus \ldots \oplus \mathcal{O}(m_{n})$, where $m_{i} \in \mathbb{N}$, then for every integer $l$ the class $\mathcal{O}
\oplus \mathcal{O}(lm_{1}) \oplus \ldots \oplus \mathcal{O}(lm_{n})$ also occurs.\end{prop}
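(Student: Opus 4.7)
The plan is to exploit the observation underlying the worked example: the parametrization $c^{l}x^{2} - c^{l}x + 1$ is literally the composition of the original parametrization $cx^{2} - cx + 1$ with the $l$th-power self-map of $\mathbb{P}^{1}$, and this scales the bundle class by $l$. I would show that the same construction works in general, by precomposing the embedding $U \hookrightarrow \Hom_{d}^{n, ss}$ with a degree-$l$ ramified cover of the base.

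First I would fix notation. Let $D = \overline{U} \cong \mathbb{P}^{1}$, let $p \in D$ be the unique point of $D \setminus U$, and let $(\mathbf{P}(\mathcal{E}), \Phi)$ be the bundle-with-self-map on $D$ provided by Theorem~\ref{SSR1} extending the trivialization on $U$, with the assumed splitting $\mathcal{E} = \mathcal{O} \oplus \mathcal{O}(m_{1}) \oplus \ldots \oplus \mathcal{O}(m_{n})$. For $l \in \mathbb{N}$, I would choose a degree-$l$ morphism $f: D' \to D$ with $D' \cong \mathbb{P}^{1}$ totally ramified over $p$ (e.g.\ $c \mapsto c^{l}$ after sending $p$ to $\infty$), and set $U' = f^{-1}(U) = D' \setminus \{p'\}$ where $p'$ is the unique preimage of $p$. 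The composition $U' \to U \hookrightarrow \Hom_{d}^{n, ss}$ is then a new trivialization to which Proposition~\ref{uni} applies.

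The key step is to identify the bundle class on $D'$ attached to this composition. I would take the pulled-back pair $(f^{*}\mathbf{P}(\mathcal{E}), f^{*}\Phi)$ and verify the two defining properties: its restriction to $U'$ is by construction the pullback of the original trivialization over $U$, hence agrees with the trivialization coming from $U' \to \Hom_{d}^{n, ss}$; and its fiber over any $x' \in D'$ is canonically the fiber of $\Phi$ over $f(x') \in D$, hence a semistable rational self-map. The uniqueness assertion of Proposition~\ref{uni} then forces $f^{*}\mathbf{P}(\mathcal{E})$ to be the bundle class associated to the new trivialization. Since $f$ has degree $l$ on $\mathbb{P}^{1}$, we have $f^{*}\mathcal{O}(k) = \mathcal{O}(lk)$, so the pullback splits as $\mathcal{O} \oplus \mathcal{O}(lm_{1}) \oplus \ldots \oplus \mathcal{O}(lm_{n})$, giving the claimed class. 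Distinct $l$'s yield distinct normalized splitting types whenever some $m_{i} > 0$, so infinitely many non-isomorphic classes occur.

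The main obstacle, such as it is, is verifying the agreement of trivializations on $U'$ and the semistability of the pulled-back fiber over the boundary point $p'$, not just over points of $U'$. The first is immediate from the commutativity of pullback of the universal family along $U' \to U \to \Hom_{d}^{n, ss}$; the second follows because the fiber of $f^{*}\Phi$ at $p'$ is canonically the fiber of $\Phi$ at $p$, which is already semistable by the semistable reduction property of $(\mathbf{P}(\mathcal{E}), \Phi)$. Everything else is routine functoriality of line bundles on $\mathbb{P}^{1}$.
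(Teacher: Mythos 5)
Your proof is correct and takes essentially the same approach as the paper's: both exploit the $c \mapsto c^{l}$ reparametrization from the worked polynomial example to scale the splitting type by $l$. If anything, your global formulation --- pulling back the whole pair $(\mathbf{P}(\mathcal{E}), \Phi)$ along a degree-$l$ cover of $D$ totally ramified over the bad point and invoking Proposition~\ref{uni} to identify the resulting class --- is spelled out more carefully than the paper's version, which modifies only the chart over $c = \infty$ and leaves the compatibility of the gluing (and the attendant change in the degree of $D \to C$) implicit.
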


\begin{proof}Imitating the analysis of the polynomial curve above, we can parametrize $C$ by one variable, say $c$, and choose coordinates such that the sole bad
point in the closure of $U$ corresponds to $c = \infty$. Now, we can by assumption find a piece $U'$ above the infinite point with a transition function determining
the vector bundle $\mathcal{O} \oplus \mathcal{O}(m_{1}) \oplus \ldots \oplus \mathcal{O}(m_{n})$. Now let $V$ be the composition of $U'$ with the map $c \mapsto
c^{l}$. Then $U$ and $V$ determine a vector bundle satisfying semistable reduction, of class $\mathcal{O} \oplus \mathcal{O}(lm_{1}) \oplus \ldots \oplus
\mathcal{O}(lm_{n})$, as required.\end{proof}

\medskip The example in Theorem~\ref{Example}, of polynomial maps, is equivalent to a multiplier condition. When $d = 2$, a map is polynomial iff it has a
superattracting fixed point, i.e. one whose multiplier is zero; see the description in the first chapter of \cite{ADS}. One can imitate the  proof that semistable
reduction does not hold for a more general curve, defined by the condition that there exists a fixed point of multiplier $t \neq 1$. In that case, the condition
$b_{1} = 0$ is replaced by $b_{1} = ta_{0}$, and the point is a repeated root of the fixed point equation iff $a_{0} = b_{1}$, in which case we clearly have $a_{0}
= b_{1} = 0$ and the point is unstable.

\medskip When the multiplier is $1$, the fixed point in question is automatically a repeated root, with $b_{1} = a_{0}$. The condition that the point is the only
fixed point corresponds to $b_{2} = a_{1}$, which by itself does not imply that the map fails to be a morphism, let alone that it is unstable.

\medskip Instead, the condition that gives us $b_{1} = a_{0} = 0$ is the condition that the fixed point is totally invariant. Specifically, the fixed point's two
preimages are itself and one more point; when the fixed point is $\infty$, the extra point is $-b_{2}/b_{1}$. Now we can map $X$ to $\mathbb{P}^{1} \times
\mathbb{P}^{1}$ where the first coordinate is the fixed point and the second is its preimage. This map is well-defined on all of $X$ because only one point can be a
double root of a cubic. Now the diagonal is ample in $\mathbb{P}^{1} \times \mathbb{P}^{1}$, so the only way a curve $D$ can avoid it is by mapping to a single
point; but in that case, $D$ lies in a fixed variety isomorphic to $\mathbb{P}^{3}$ where the unstable locus is $\mathbb{P}^{2}$, so it will intersect the unstable
locus.

\medskip The fact that any condition of the form ``there exists a fixed point of multiplier $t$'' induces a curve for which semistable reduction requires a
nontrivial bundle means that there is no hope of enlarging the semistable space in a way that ensures we always have a trivial bundle. We really do need to think of
semistable reduction as encompassing nontrivial bundle classes as well as trivial ones.

\medskip Specifically: it is trivial to show that the closure of the polynomial locus in $\Rat_{2}$ includes all the unstable points (fix $\infty$ to be the totally
invariant point and let $a_{0}$ go to zero). At least some of those unstable points will also arise as closures of other multiplier-$t$ conditions. However,
different multiplier-$t$ conditions limit to different points in $\mathrm{M}_{2}^{ss}\setminus\mathrm{M}_{2}$.

\section{The General Case}\label{pfbad}

\noindent So far we have talked about nontrivial classes in $\mathrm{M}_{2}$. But we have a stronger result, restating Theorem~\ref{bad1}:

\begin{thm}\label{bad2}For all $n$ and $d$, over any base field, there exists a curve with no trivial bundle class satisfying semistable reduction.\end{thm}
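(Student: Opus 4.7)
The plan is to generalize the polynomial-curve construction of Proposition~\ref{Example} to arbitrary $n$ and $d$. I would take $C$ to be a projective curve inside the locus $\overline{X} \subseteq \mathrm{M}_d^{n,ss}$ of morphisms admitting a totally invariant fixed point, and imitate the three-step structure of the $n=1$, $d=2$ argument. Normalizing the totally invariant point to $P=(1:0:\ldots:0)$, the slice $X_P \subseteq \mathbb{P}^N$ over $P$ consists of $\varphi$ such that $\varphi_i$ ($i\geq 1$) has vanishing $x_0^d$-coefficient and the restricted system $(\varphi_1|_{x_0=0},\ldots,\varphi_n|_{x_0=0})$ has no common zero on $\mathbb{P}^{n-1}$; the quotient of $\overline{X_P}$ by the stabilizer of $P$ in $\PGL(n+1)$ is positive-dimensional, so $\overline{X}$ supports positive-dimensional projective curves.

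The second step is to describe the unstable boundary of $X_P$ inside its closure in $\mathbb{P}^N$. Applying the Hilbert--Mumford criterion recalled in Section~\ref{GITrecap} to the one-parameter subgroup $T = \mathrm{diag}(t^n, t^{-1}, \ldots, t^{-1})$ of $\SL(n+1)$ — whose unique high-weight eigenline is spanned by $x_0$, so that $T$ is the natural candidate to witness the degeneration of $P$ — one finds that the boundary of $X_P$ in its closure is cut out by the vanishing of the $x_0^d$-coefficient of $\varphi_0$, which is exactly the degeneration of $P$ from a fixed point to a bad point of $\varphi$. This is the direct analogue of the condition $a_0 = 0$ from the proof of Proposition~\ref{Example} and is cut out by a hyperplane in $\overline{X_P}$; as in that proof, any curve in $\overline{X}$ that lies above a single totally invariant point must meet this hyperplane and hence contain an unstable point.

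The third step globalizes. I would construct a morphism $\mu\colon \overline{X} \to \mathbb{P}^n \times \mathbb{P}^M$ whose first factor records the totally invariant point and whose second factor records $\PGL(n+1)$-equivariant data at that point, for example symmetric functions of the roots of the restricted map $\psi = (\varphi_1|_{x_0=0}:\ldots:\varphi_n|_{x_0=0})$ of $\mathbb{P}^{n-1}$, or invariants of the differential $d\varphi_P \in \End(T_P\mathbb{P}^n)$. By construction the degeneration locus where $P$ collides with this data pulls back from a single bihomogeneous form of positive bidegree on $\mathbb{P}^n \times \mathbb{P}^M$, hence from an ample divisor $\Delta$. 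Any projective curve $C \subseteq \overline{X}$ either maps to a point of $\mathbb{P}^n$ under the first projection — in which case the previous paragraph applies — or projects to a curve in $\mathbb{P}^n \times \mathbb{P}^M$ meeting $\Delta$; in both cases any lift $D \to C$ into $\Hom_d^{n,ss}$ would contain an unstable point, contradicting $D \subseteq \Hom_d^{n,ss}$.

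The main obstacle is the third step: finding the right second factor $\mathbb{P}^M$ and verifying that the degeneration divisor is genuinely ample, not merely effective. For $n \geq 2$ the fixed-point scheme of $\varphi$ has degree $(d^{n+1}-1)/(d-1)$ and is not a hypersurface, so the literal analogue of the quadratic form $ax^2 + bxy + cy^2$ from Proposition~\ref{Example} is no longer available; one must instead package the degeneration as a single invariant of $(P, \psi)$ or of $(P, d\varphi_P)$ and verify the positivity of both factors of its bidegree by an explicit monomial computation. All remaining ingredients — the Hilbert--Mumford criterion and the ampleness of positive-bidegree divisors on products of projective spaces — are purely algebraic, so the construction descends to any base field, as the theorem requires.
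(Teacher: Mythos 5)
Your outline follows the right model (generalizing the polynomial-curve example of Proposition~\ref{Example}), and for $n=1$ it is close to what the paper does, but as written it has genuine gaps, the most serious being the two steps you yourself flag as incomplete. First, your step 2 is not correct as stated for $n \geq 2$: the one-parameter subgroup $\mathrm{diag}(t^{n}, t^{-1}, \ldots, t^{-1})$ destabilizes a map only if \emph{every} nonzero monomial of \emph{every} $\varphi_{i}$ has weight of the appropriate sign, and for $i \geq 1$ this forces (already when $d = 2$) all monomials of $\varphi_{i}$ to be $x_{0}$-free --- a far stronger condition than membership in your slice $X_{P}$. Merely having the totally invariant fixed point degenerate to a bad point does not imply instability in higher dimension. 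The paper circumvents this by working not with the totally-invariant-point locus but with maps polynomial with respect to a full flag (a Borel subgroup), i.e.\ the family $(c_{0}x_{0}^{d} + bx_{1}^{d} : q_{1} : \ldots : q_{n})$ with $q_{i}$ free of $x_{j}$ for $j < i$; only with all of those vanishing conditions in place does $c_{0} = 0$ give instability. Second, your step 3 --- packaging the collision of $P$ with other fixed-point data as an ample divisor on $\mathbb{P}^{n} \times \mathbb{P}^{M}$ --- is exactly where the content lies, and you leave it unconstructed. The paper avoids the problem entirely: it restricts to the invariant line $x_{2} = \ldots = x_{n} = 0$, on which the family induces a one-variable map with a totally invariant fixed point, and reruns the one-dimensional fixed-point-collision argument there rather than building a global degeneration divisor.

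Two further points you gloss over but the paper must (and does) address. You need to know that any semistable lift $D$ of $C$ actually consists of maps conjugate into your normal form, so that ``the $x_{0}^{d}$-coefficient of $\varphi_{0}$ vanishes'' is a meaningful closed condition on $D$ that forces instability; this requires showing that the polynomial locus is closed in $\mathbb{P}^{N}$, computing exactly which degenerate maps occur in the closure of the $\PGL(n+1)$-orbit of the chosen family, and showing that semistable limits lie in the closure of the Borel orbit (the paper's Lemmas~\ref{poly1}, \ref{flag}, \ref{last} together with the orbit-closure computation, which occupy most of its proof). Finally, the claim that everything ``descends to any base field'' is too quick: when the characteristic $p$ divides $d$ the binomial identities controlling the orbit closure degenerate, and when $d = p^{m}$ the family $x^{d} + c$ must be replaced by a different one (the paper uses $\frac{a_{0}x^{d} + a_{d-1}xy^{d-1}}{b_{d}y^{d}}$). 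So the proposal identifies the right class of examples but does not yet contain a proof.
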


\begin{proof}In all cases, we will focus on \textbf{polynomial maps}, which we will define to be maps that are $\PGL(n+1)$-conjugate to maps for which the last
polynomial $q_{n}$ has zero coefficients in every monomial except possibly $x_{n}^{d}$.

\begin{lem}\label{poly1}The set of polynomial maps, defined above, is closed in $\overline{\Hom_{d}^{n}} = \mathbb{P}^{N}$.\end{lem}

\begin{proof}Clearly, the set of polynomial maps with respect to a particular hyperplane -- for example, $x_{n} = 0$ -- is closed. Now, for each hyperplane
$a_{0}x_{0} + \ldots + a_{n}x_{n} = 0$, we can check by conjugation to see that the condition that the map is polynomial corresponds to the condition that
$a_{0}q_{0} + \ldots + a_{n}q_{n} = c(a_{0}x_{0} + \ldots + a_{n}x_{n})^{d}$, where $c$ may be zero. As $\mathbb{P}^{n}$ is proper, it suffices to show that the
condition ``$\varphi$ is polynomial with respect to $a_{0}x_{0} + \ldots + a_{n}x_{n} = 0$'' is closed in $\left(\mathbb{P}^{n}\right)^{*} \times \mathbb{P}^{N}$.

\medskip Now, we may construct a rational function $f$ from $\left(\mathbb{P}^{n}\right)^{*} \times \mathbb{P}^{N}$ to $\Sym^{d}(\mathbb{P}^{n}) \times
\Sym^{d}(\mathbb{P}^{n})$ by $((a_{0}x_{0} + \ldots + a_{n}x_{n}), \varphi) \mapsto ((a_{0}x_{0} + \ldots + a_{n}x_{n})^{d}, a_{0}q_{0} + \ldots + a_{n}q_{n})$. The
map $\varphi$ is polynomial with respect to $a_{0}x_{0} + \ldots + a_{n}x_{n} = 0$ iff $f$ is ill-defined at $((a_{0}x_{0} + \ldots + a_{n}x_{n}), \varphi)$ or
$f((a_{0}x_{0} + \ldots + a_{n}x_{n}), \varphi) \in \Delta$, the diagonal subvariety. The ill-defined locus of $f$ is closed, and the preimage of $\Delta$ is closed
in the well-defined locus.\end{proof}

\medskip In fact, the condition of $\varphi$ being polynomial with respect to any number of distinct hyperplanes in general position -- in other words, the
condition that $\varphi$ is conjugate to a map for which $q_{i} = c_{i}x_{i}^{d}$ for all $i > 0$ (or $i > 1$, etc.) -- is more or less closed as well. It is not
closed, but a sufficiently good condition is closed. Namely:

\begin{lem}\label{flag}For each $1 \leq i \leq n$, consider the $\PGL(n+1)$-orbit of the space of maps in which, for each $j \geq i$, $q_{j}$ has zero coefficients
in every monomial containing any term $x_{k}$ with $k < j$. This orbit is closed in $\mathbb{P}^{N}$.\end{lem}

\begin{proof}Observe that the above-defined space of maps consists of maps that are polynomial with respect to $x_{n} = 0$, such that the induced map on the totally
invariant hyperplane $x_{n} = 0$ is polynomial with respect to $x_{n-1} = 0$, and so on until we reach the induced map on the totally invariant subspace $x_{i+1} =
\ldots = x_{n} = 0$.

\medskip Now we use descending induction. Lemma~\ref{poly1} is the base case, when $i = n$. Now suppose it is true down to $i$. Then for $i - 1$, the condition of
having no nonzero $x_{k}$ term in $q_{i-1}$ with $k < i - 1$ is equivalent to the condition that the induced map on the totally invariant subspace $x_{i} = x_{i+1}
= \ldots = x_{n} = 0$ is polynomial; this condition is closed in the space of all maps that are polynomial down to $x_{i}$, which we assume closed by the induction
hypothesis.\end{proof}

\begin{defn}We call maps of the form in the above lemma \textbf{polynomial with respect to $B$}, where $B$ is the Borel subgroup preserving the ordered basis of
conditions. In the case above, $B$ is the upper triangular matrices.\end{defn}

\medskip We need one final result to make computations easier:

\begin{lem}\label{last}Let $X$ be a curve of polynomial maps, all with respect to a Borel subgroup $B$, and let $\varphi$ be a semistable map in
$\overline{\PGL(n+1)\cdot X}$. Then $\varphi \in \overline{B\cdot X}$.\end{lem}

\begin{proof}Let $C$ be the closure of the image of $X$ in $\mathrm{M}_{d}^{n, ss}$. By semistable reduction, there exists some affine curve $Y \ni \varphi$ mapping
finite-to-one to $C$, i.e. dominantly. We need to find some open $Z \subseteq Y$ containing $\varphi$ and some $f: Z \to \PGL(n+1)$ such that $f(\varphi)$ is the
identity matrix, and $Z' = \{(f(z)\cdot z)\}$ consists of maps which are polynomial with respect to $B$. Such map necessarily exists: we have a map $h$ from $Y$ to
the flag variety of $\mathbb{P}^{n}$ sending each $y$ to the subgroup with respect to which it is polynomial (possibly involving some choice if generically $y$ is
polynomial with respect to more than one flag), which then lifts to $G$, possibly after deleting finitely many points. Generically, a point of $X$ maps to a point
of $C$ that is in the image of $Z$; therefore, picking the correct points in $X$, we get that $\varphi \in \overline{B\cdot X}$.\end{proof}

\medskip With the above lemmas, let us now prove the theorem with $n = 1$, which is slightly easier than the higher-$n$ case, where the more complicated
Lemma~\ref{flag}. We will use the family $x^{d} + c$, where $c \in \mathbb{A}^{1}$. In projective notation, this is $\frac{a_{0}x^{d} + a_{d}y^{d}}{b_{d}y^{d}}$,
which is a one-dimensional family modulo conjugation. We have,

\begin{lem}Let $V$ be the closure of the $\PGL(2)$-orbit of the family $\frac{a_{0}x^{d} + a_{d}y^{d}}{b_{d}y^{d}}$ in $\mathbb{P}^{N}$. Then:

\begin{enumerate}
  \item In characteristic $0$ or $p \nmid d$, every $\varphi \in V$ is actually in the $\PGL(2)$-orbit of the family, or else it is a degenerate linear map, conjugate
  to $\frac{a_{d-1}xy^{d-1} + a_{d}y^{d}}{b_{d}y^{d}}$.
  \item In characteristic $p \mid d$, with $p^{m} \mid\mid d$ and $p^{m} \neq d$, every $\varphi \in V$ is in the $\PGL(2)$-orbit of the family or is a degenerate map
  conjugate to $\frac{a_{d-p^{m}}xy^{d-p^{m}} + a_{d}y^{d}}{b_{d}y^{d}}$.
  \item In characteristic $p$ with $d = p^{m}$, set $V$ to be the closure of the orbit of the family $\frac{a_{0}x^{d} + a_{d-1}xy^{d-1}}{b_{d}y^{d}}$; then every
  $\varphi \in V$ is actually in the orbit of the family, or else it is a degenerate linear map, conjugate to $\frac{a_{d-1}xy^{d-1} + a_{d}y^{d}}{b_{d}y^{d}}$, and
  furthermore $a_{d-1} = b_{d}$.
\end{enumerate}
\end{lem}

\begin{proof}Observe that the first two cases are really the same: case $2$ is reduced to case $1$ viewed as a degree-$\frac{d}{p^{m}}$ map in
$(x^{p^{m}}:y^{p^{m}})$. So it suffices to prove case $1$ to prove $2$; we will start with the family $\frac{a_{0}x^{d} + a_{d}y^{d}}{b_{d}y^{d}}$ and see what
algebraic equations its orbit satisfies. As polynomials are closed in $\overline{\Rat_{d}}$, every point in the closure of the orbit is a polynomial. We may further
assume it is polynomial with respect to $y = 0$; therefore, by Lemma~\ref{last}, it suffices to look at the action of upper triangular matrices. Further, the
condition of being within the family $\frac{a_{0}x^{d} + a_{d}y^{d}}{b_{d}y^{d}}$ is stabilized by diagonal matrices; therefore, it suffices to look at the action
of matrices of the form $[1, t; 0, 1]$.

\medskip Now, the conjugation action of $[1, t; 0, 1]$ fixes $b_{d}y^{d}$ and maps $a_{0}x^{d} + a_{d}y^{d}$ to $a_{0}(x - ty)^{d} + (a_{d} + tb_{d})y^{d}$. Clearly,
there is no hope of obtaining any condition on $b_{d}$ or $a_{d}$. Now, the conditions on the terms $a_{0}, \ldots, a_{d-1}$ are that for some $t$, they fit into
the pattern $a_{0}(x^{d} - dtx^{d-1}y + \ldots \pm dt^{d-1}xy^{d-1})$, i.e. $a_{i} = (-t)^{i}{d \choose i}a_{0}$. To remove the dependence on $t$, note that when $i
+ j = k + l$, we have ${d \choose i}{d \choose j}a_{i}a_{j} = {d \choose k}{d \choose l}a_{k}a_{l}$, as long as $i, j, k, l < d$.

\medskip Let us now look at what those conditions imply. Setting $j = i, k = i-1, l = i+1$, we get conditions of the form ${d \choose i}^{2}a_{i}^{2} = {d \choose
i-1}{d \choose i+1}a_{i-1}a_{i+1}$, whenever $i + 1 < d$. If $a_{0} \neq 0$, then the value of $a_{1}$ uniquely determines the value of $a_{2}$ by the condition
with $i = 1$; the value of $a_{2}$ uniquely determines $a_{3}$ by the condition with $i = 2$; and so on, until we uniquely determine $a_{d-1}$. In this case,
choosing $t = -\frac{a_{1}}{da_{0}}$ will conjugate this map back to the family $\frac{a_{0}x^{d} + a_{d}y^{d}}{b_{d}y^{d}}$. If $a_{0} = 0$, then the equation with
$i = 1$ will imply that $a_{1} = 0$; then the equation with $i = 2$ will imply that $a_{2} = 0$; and so on, until we set $a_{d-2} = 0$. We cannot ensure $a_{d-1} =
0$ because $a_{d-1}$ always appears in those equations multiplied by a different $a_{i}$, instead of squared. Hence we could get a degenerate-linear map.

\medskip In case $3$, we again look at the action of matrices of the form $[1, t; 0, 1]$. Such matrices map $\frac{a_{0}x^{d} + a_{d-1}xy^{d-1}}{b_{d}y^{d}}$ to
$\frac{a_{0}x^{d} + a_{d-1}xy^{d-1} + (-a_{0}t^{d} - a_{d-1}t + b_{d}t)y^{d}}{b_{d}y^{d}}$. Now the only way a map of the form $\frac{a_{0}x^{d} + a_{d-1}xy^{d-1} +
a_{d}y^{d}}{b_{d}y^{d}}$ could degenerate is if the image of the polynomial map $t \mapsto -a_{0}t^{d} - a_{d-1}t + b_{d}t$ misses $a_{d}$, which could only happen
if the polynomial were constant, i.e. $a_{0} = 0$ and $a_{d-1} = b_{d}$, giving us a degenerate-linear map.\end{proof}

\begin{rem}The importance of the lemma is that in all degenerate cases, the map is necessarily unstable, since $d-1$ (or, in case $2$, $d-p^{m}$) is always at least
as large as $d/2$.\end{rem}

\medskip We can now prove the theorem for $n = 1$. So if we can always find a $D \subseteq \Hom_{d}^{n, ss}$ that works globally, we can find one over a family in
which every map is conjugate to $\frac{a_{0}x^{d} + a_{d}y^{d}}{b_{d}y^{d}}$, or, in characteristic $p$ with $d = p^{m}$, $\frac{a_{0}x^{d} +
a_{d-1}xy^{d-1}}{b_{d}y^{d}}$. It suffices to show that there exists a map with $a_{0} = 0$. For this, we use the fixed point polynomial, which is well-defined on
this family. If the polynomial is fixed, then all maps in the family may be simultaneously conjugated to the form $\frac{a_{0}x^{d} + a_{d}y^{d}}{b_{d}y^{d}}$ (or
$\frac{a_{0}x^{d} + a_{d-1}xy^{d-1}}{b_{d}y^{d}}$), and then one map must have $a_{0} = 0$. If the polynomial varies, then some map will have the point at infinity
colliding with another fixed point. This will force the map to be ill-defined at infinity; recall that totally invariant points are simple roots of the fixed point
polynomial, unless they are bad. This will force $a_{0}$ to be zero, again.

\medskip For higher $n$, the proof is similar. The lemma we need is similar to the lemma we use above, but is somewhat more complicated:

\begin{lem}Let $V$ be the closure of the $\PGL(n+1)$-orbit of the family $(c_{0}x_{0}^{d} + bx_{1}^{d}:q_{1}:\ldots:q_{n})$, where $q_{i}$ is $x_{j}$-free for all
$j < i$.

\begin{enumerate}
  \item If the characteristic does not divide $d$, then every $\varphi \in V$ is actually in the $\PGL(n+1)$-orbit of the family, or else it is a degenerate map,
  whose only possible nonzero coefficients in $q_{0}$ are those without an $x_{0}$ term and those of the form $x_{0}p_{0}$ where there is no nonzero $x_{0}$-term in
  $p_{0}$.
  \item If the characteristic $p$ satisfies $p \mid d$, with $d \neq p^{m} \mid\mid d$ then the same statement as in case $1$ holds as long as each $q_{i}$ is in
  terms of $x_{j}^{p^{m}}$, but with $x_{0}p_{0}$ replaced by $x_{0}^{p^{m}}p_{0}$.
  \item If the characteristic $p$ satisfies $d = p^{m}$ then, changing the family to $(c_{0}x_{0}^{d} + bx_{0}x_{1}^{d-1}:q_{1}:\ldots:q_{n})$, with $q_{i}$ in
  terms of $x_{j}^{d}$ as in case $2$, the same statement as in case $1$ holds.
\end{enumerate}
\end{lem}

\begin{proof}As in the one-dimensional case, case $2$ is reducible to case $1$ with $d$ replaced with $\frac{d}{p^{m}}$ and $x_{i}$ with $x_{i}^{m}$. By
Lemma~\ref{last}, we only need to conjugate by upper triangular matrices. Further, we only need to conjugate by just matrices of the family $E$, with first row $(1,
t_{1}, \ldots, t_{n})$ and other rows the same as the identity matrix. This is because we can control the diagonal elements because the condition of being in the
family is diagonal matrix-invariant, and we can control the rest by projecting any curve $Z$ of unipotent upper triangular matrices onto $E$.

\medskip Set $a_{\mathbf{d}}$ to be the $\mathbf{x^{d}}$-coefficient in $q_{0}$. For all vectors $\mathbf{i}$, $\mathbf{j}$, $\mathbf{k}$, $\mathbf{l}$ with
$\mathbf{i} + \mathbf{j} = \mathbf{k} + \mathbf{l}$, we have ${d \choose \mathbf{i}}{d \choose \mathbf{j}}a_{\mathbf{i}}a_{\mathbf{j}} = {d \choose \mathbf{k}}{d
\choose \mathbf{l}}a_{\mathbf{k}}a_{\mathbf{l}}$, as long as none of $\mathbf{i}$, $\mathbf{j}$, $\mathbf{k}$, or $\mathbf{l}$ is in the span of $\mathbf{e}_{i}$
for $i > 0$. Note that $i$ and $\mathbf{i}$ are two separate quantities, one an index of coordinates and one an index of monomials.

\medskip As in the one-dimensional case, we may set $\mathbf{j} = \mathbf{i}$ and $\mathbf{k} = \mathbf{i} - \mathbf{e}_{0} + \mathbf{e}_{i}$. If $c_{0} = a_{(d, 0,
\ldots, 0)} \neq 0$, then by the same argument as before, the values of the $x_{0}^{d-1}x_{i}$-coefficients determine all the rest, and we can conjugate the map
back to the desired form. And if $c_{0} = 0$, then the value of every coefficient that can occur as $\mathbf{i}$ in the above construct is zero; the only
coefficients that cannot are those with no $x_{0}$ component and those with a linear $x_{0}$ component.

\medskip In case $3$, we restrict to matrices of the same form as in case $1$, and observe that those matrices only generate extra $x_{i}^{d}$ and
$x_{i}x_{1}^{d-1}$ in $q_{0}$. The statement is vacuous if $c_{0} = 0$, so assume $c_{0} \neq 0$. For $i = 1$, this is identical to the one-dimensional case, so if
$c_{0} \neq 0$ then we can find an appropriate $t_{1}$. For higher $i$, if $b \neq 0$ then we can extract $t_{i}$ from the $x_{i}x_{1}^{d-1}$ coefficient, which
will necessarily work for the $x_{i}^{d}$ coefficient as well, making the map conjugate to the family; if $b = 0$, then the same equations as for $i = 1$ hold for
higher $i$, and we can again find $t_{i}$'s conjugating the map to the family.\end{proof}

\medskip While we could also control the terms involving a linear (or $p$-power) $x_{0}$ coefficient in the above construction, it is not necessary for our purposes.

\medskip To finish the proof of the theorem, first note that in the closure of the family above, any map for which $c_{0} = 0$ is unstable. Indeed, the
one-parameter subgroup of $\PGL(n+1)$ with diagonal coefficients $t_{0} = n, t_{i} = -1$ for $i > 0$, shows instability. Recall that a map is unstable with respect
to such a family if $t_{i} > t_{0}d_{0} + \ldots + t_{n}d_{n}$ whenever the $x_{0}^{d_{0}}\ldots x_{n}^{d_{n}}$-coefficient of $q_{i}$ is nonzero. With the above
one-parameter subgroup, we have $t_{0}d_{0} + \ldots + t_{n}d_{n} = -d < -1$ for the only nonzero monomials in $q_{i}$ with $i > 0$; in $q_{0}$, the maximal value
of $t_{0}d_{0} + \ldots + t_{n}d_{n}$ is $t_{0} + t_{i}(d-1) = n - (d-1) < n$.

\medskip Now we need to show only that for some map in the family, $c_{0}$ will indeed be zero. So suppose on the contrary that $c_{0}$ is never zero. Then all maps
are, after conjugation, in the family $(c_{0}x_{0}^{d} + bx_{1}^{d}:q_{1}\ldots:q_{n})$, where the linear subvariety $q_{i} = q_{i+1} = \ldots = q_{n}$ is totally
invariant. Now look at the action on the line $x_{2} = \ldots = x_{n} = 0$. Every morphism will induce a morphism on this line, so there will be three fixed points
on it, counting multiplicity. We now imitate the proof in the one-dimensional case: the totally invariant fixed point on this line, $(1:0:\ldots:0)$, will collide
with another fixed point, so the map will be ill-defined at it. This means that $(1:0:\ldots:0)$ is a bad point, which cannot happen unless $c_{0} = 0$.\end{proof}

\medskip Trivially, the above theorem for curves shows the same for higher-dimensional families in $\mathrm{M}_{d}^{n, ss}$. An interesting question could be to
generalize semistable reduction to higher-dimensional families, for which we may get projective vector bundles just like in the case of curves. Trivially, if we
have two proper subvarieties of $\mathrm{M}_{d}^{n, ss}$, $V_{1} \subseteq V_{2}$, and a bundle class occurs for $V_{2}$, then its restriction to $V_{1}$ occurs for
$V_{1}$. In particular, if we have the trivial class over $V_{2}$ then we also have it over $V_{1}$, as well as any other subvariety of $V_{2}$. This leads to the
following question: if the trivial class occurs for every proper closed subvariety of $V_{2}$, does it necessarily occur for $V_{2}$? What if we weaken the
condition and only require the trivial class to occur for subvarieties that cover $V_{2}$?

\section{The Trivial Bundle Case}\label{goodcase}

\noindent For most curves $C \subseteq \mathrm{M}_{d}^{n, ss}$, there occurs a trivial bundle. Since the complement of $\Hom_{d}^{n, ss}$ in $\mathbb{P}^{N}$ has
high codimension, this is true by simple dimension counting. Therefore, it is useful to analyze those curves separately, as we have more tools to work with.
Specifically, we can use more machinery from geometric invariant theory. We will start by proving Proposition~\ref{GIT}, restated below:

\begin{prop}\label{GIT2}Let $X$ be a projective variety over an algebraically closed field with an action by a geometrically reductive linear algebraic group $G$.
Using the terminology of geometric invariant theory, let $D$ be a complete curve in the stable space $X^{s}$ whose quotient by $G$ is a complete curve $C$; say the
map from $D$ to $C$ has degree $m$. Suppose the stabilizer is generically finite, of size $h$, and either $D$ or $C$ is normal. Then there exists a finite subgroup
$S_{D} \subseteq G$, of order equal to $mh$, such that for all $x \in D$ and $g \in G$, $gx \in D$ iff $g \in S_{D}$.\end{prop}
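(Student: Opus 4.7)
The plan is to take $S_D$ to be the set-theoretic stabilizer $\{g \in G : gD = D\}$, show it is a finite subgroup, use the normality hypothesis to identify $D/S_D$ with $C$, and extract the order formula $|S_D|=mh$ and the biconditional from that identification.

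First I would verify that $S_D$ is a closed subgroup of $G$. The subgroup axioms are immediate. Closedness uses that $\Sigma := \{(g,x) \in G \times D : gx \in D\}$ is closed in $G \times D$ (it is the preimage of $D \subseteq X$ under the action map restricted to $G \times D$), and that $g \in S_D$ precisely when the fiber of $\Sigma \to G$ over $g$ equals all of $D$, which is a closed condition on $g$ because $gD$ and $D$ are irreducible curves of the same dimension and so $gD \subseteq D$ forces $gD = D$. I would then show $\dim S_D = 0$: the identity component $S_D^{0}$ acts on $D$ with orbits in the finite fibers of $\pi\colon D \to C$, so all orbits are finite, forcing $S_D^{0} \subseteq \Stab_G(x)^{0}$ for every $x \in D$; but $\Stab_G(x)$ is finite on the stable locus $X^{s}$, so $S_D^{0}$ is trivial. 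Hence $S_D$ is finite.

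The heart of the proof is to identify $C$ with $D/S_D$. Since $S_D$-orbits in $D$ lie inside fibers of $\pi|_D$, there is an induced finite morphism $\bar\pi\colon D/S_D \to C$. I would show $\bar\pi$ is birational by a rigidity argument: for generic $x_0 \in D$ and any $y_0 \in Gx_0 \cap D$, write $y_0 = g_0 x_0$ and track, as $x$ moves in an open neighbourhood in $D$, the companion point $y(x)\in Gx \cap D$ lying in the same branch as $y_0$. This is an algebraic morphism $U \to D$ over $C$, and by completeness of $D$ it extends to an automorphism $\phi$ of $D$ over $C$. The $G$-elements realizing $\phi$ at each point assemble into a morphism from the complete curve $D$ to a coset space $G/\Stab$ of zero relative dimension, which must be constant, producing a single $g \in S_D$ with $gx_0 = y_0$. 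With $\bar\pi$ a finite birational morphism between complete curves one of which is normal, Zariski's main theorem forces it to be an isomorphism.

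Given $D/S_D \cong C$, let $K = \ker(S_D \to \Aut(D))$; then $\deg(D \to D/S_D) = |S_D|/|K| = m$, so $|S_D| = m|K|$. The kernel $K$ fixes every point of $D$, so $K \subseteq \Stab_G(x)$ for all $x \in D$; the rigidity argument above, applied to any element fixing a generic $x$, yields the reverse containment $\Stab_G(x) \subseteq K$ generically, so $|K| = h$ and $|S_D| = mh$. The biconditional follows: $(\Leftarrow)$ is trivial, and for $(\Rightarrow)$, given $gx \in D$ set $y = gx$, pick $s \in S_D$ with $sx = y$ (from $D/S_D = C$), whence $s^{-1}g \in \Stab_G(x) \subseteq K \subseteq S_D$ and thus $g \in S_D$.

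The main obstacle will be the birationality of $\bar\pi$—promoting the pointwise identifications $y = g(x)\cdot x$ to a single global $g \in S_D$. This is where normality and completeness of $D$ must do the real work; without them, one could imagine a continuous family of $G$-elements realizing the fiber bijections without any single element preserving $D$, or a non-reduced structure on $D \times_C D$ obstructing the lift of a finite birational map to an isomorphism.
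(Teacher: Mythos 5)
Your overall architecture (take $S_{D}=\{g\in G: gD=D\}$, prove it is a finite subgroup, identify $D/S_{D}$ with $C$ up to a finite birational map, and read off $|S_{D}|=mh$) is reasonable, and your finiteness argument via the identity component of $S_{D}$ is correct. The gap is at the central step, the birationality of $D/S_{D}\to C$. You construct the automorphism $\phi$ of $D$ over $C$ with $\phi(x_{0})=y_{0}$ by ``tracking the companion point $y(x)\in Gx\cap D$ lying in the same branch as $y_{0}$'' and asserting that this is an algebraic morphism $U\to D$ on a Zariski-open $U$. That assertion is precisely the statement that the irreducible component of $D\times_{C}D$ through $(x_{0},y_{0})$ has degree one over the first factor, i.e.\ that the cover $D\to C$ is Galois --- which is the substance of the proposition. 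For a general finite non-Galois cover such a component has degree greater than one over $D$ and no such $\phi$ exists; tracking an analytic branch only produces a formal-local section, and completeness of $D$ does not promote a formal-local map to a global algebraic automorphism. So the existence of $\phi$ is assumed rather than proven, and the rigidity step you invoke afterwards (a complete curve mapping to a coset space of the affine group $G$ must be constant), which is the correct tool, is applied too late to close the circle.

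The fix is to apply that rigidity directly to the incidence variety $\Sigma=\{(g,x)\in G\times D: gx\in D\}$ before any $\phi$ is in hand. The action map $G\times X^{s}\to X^{s}\times X^{s}$ is proper on the stable locus --- this is the essential GIT input, and it is what the paper's Lemma~\ref{sym} encodes in its $\Sym^{mh}$ packaging --- so $\Sigma\to D$ is proper and quasi-finite, hence finite; thus $\Sigma$ is a complete curve whose image in the affine group $G$ is finite, and every irreducible component of $\Sigma$ dominating $D$ has the form $\{g\}\times D$, i.e.\ $g\in S_{D}$. Since the fiber of $\Sigma$ over a generic $x_{0}$ has exactly $mh$ points and only finitely many points of $D$ lie under the non-dominating components, every $g_{0}$ with $g_{0}x_{0}\in D$ lies in $S_{D}$ for generic $x_{0}$. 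This produces your $\phi=g_{0}|_{D}$, the constancy of $x\mapsto\{g: gx\in D\}$, and the count $|S_{D}|=mh$ in one stroke; it is essentially the paper's argument, which records the $mh$-element sets $\{g: gx\in D\}$ as points of the affine variety $\Sym^{mh}(G)$ and uses the same properness-into-affine rigidity (with the normality hypothesis serving to extend $x\mapsto \{g: gx\in D\}\cdot x$ to a morphism on all of $D$) to conclude that this set is independent of $x$. Your remaining steps --- identifying the generic stabilizer with the kernel $K$ of $S_{D}\to\Aut(D)$ and computing $|S_{D}|=m|K|=mh$ --- go through once this is in place, though the claim that the generic stabilizer is contained in $K$ should be argued by noting that each $s\in S_{D}\setminus K$ fixes only finitely many points of $D$.
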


\begin{proof}For $x \in D$, we define $S_{D}(x) = \{g \in G: gx \in D\}$. This is a map of sets from an open dense subset of $D$ to $\Sym^{mh}(G)$, and is regular
on an open dense subset. We have:

\begin{lem}\label{sym}The map from $\Sym^{mh}(G) \times X^{s}$ to $\Sym^{mh}(X^{s}) \times X^{s}$ defined by sending each $(\{g_{1}, \ldots, g_{mh}\}, x)$ to
$(\{g_{1}\cdot x, \ldots, g_{mh}\cdot x\}, x)$ is proper.\end{lem}

\begin{proof}By standard geometric invariant theory, the map from $G \times X^{s}$ to $X^{s} \times X^{s}$, $(g, x) \mapsto (g\cdot x, x)$, is proper. Therefore the
map from $G^{mh} \times (X^{s})^{mh}$ to $(X^{s})^{mh} \times (X^{s})^{mh}$ defined by $(g_{i}, x_{i}) \mapsto (g_{i}\cdot x_{i}, x_{i})$ is also proper, as the
product of proper maps. Now closed immersions are proper, so the map remains proper if we restrict it to $G^{mh} \times X^{s}$ where we embed $X^{s}$ into
$(X^{s})^{mh}$ diagonally; the image of this map lands in $(X^{s})^{mh} \times X^{s}$. Finally, we quotient out by the symmetric group $S_{k}$, obtaining:

\begin{equation*}
  \xymatrix@R+2em@C+2em{
  G^{mh} \times X^{s} \ar[r] \ar[d]_{\pi} & (X^{s})^{mh} \times X^{s} \ar[d]^{\pi} \\
  \Sym^{mh}(G) \times X^{s} \ar[r] & \Sym^{mh}(X^{s}) \times X^{s}
  }
 \end{equation*}

\medskip The map on the bottom is already separated and finite-type; we will show it is universally closed. Extend it by some arbitrary scheme $Y$. If $V \subseteq
\Sym^{mh}(G) \times X^{s} \times Y$ is closed, then so is $\pi^{-1}(V) \subseteq G^{mh} \times X^{s} \times Y$. The map on top is universally closed, so its image
is closed in $(X^{s})^{mh} \times X^{s} \times Y$. But the map on the right is proper, so the image of $V$ is also closed in $\Sym^{mh}(X^{s}) \times X^{s} \times
Y$.\end{proof}

\medskip Now, the rational map $f_{D}(x) = S_{D}(x)\cdot x \in \Sym^{mh}(D)$ can be extended to a morphism on all of $D$, since both $D$ and $\Sym^{mh}(D)$ are
proper. This is trivial if $D$ is normal; if it is not normal, but $C$ is normal, then observe that the map factors through $C$ since it is constant on orbits, and
then analytically extend it through $C$. But now $(f_{D}(x), x)$ embeds into $\Sym^{mh}(X^{s}) \times X^{s}$ as a proper curve. The preimage in $\Sym^{mh}(G) \times
X^{s}$ of this curve is also proper; for each $(f_{D}(x), x)$, it is a finite set of points of the form $(S, x)$ satisfying $S\cdot x = f_{D}(x)$, including
$(S_{D}(x), x)$. Projecting onto the $\Sym^{mh}(G)$ factor, we still get a proper set, which means it must be a finite set of points, as $\Sym^{mh}(G)$ is affine.
One of these points will be $S_{D}$, which is then necessarily finite.

\medskip Finally, if $g, h \in S_{D}$ and $x \in D$ then $g\cdot h\cdot x \in g \cdot D = D$; therefore $S_{D}$ is a group.\end{proof}

\begin{rem}The proposition essentially says that the cover $D \to C$ is necessarily Galois. The generic stabilizer is necessarily a group $H$, normal in
$S_{D}$.\end{rem}

\begin{cor}With the same notation and conditions as in Proposition~\ref{GIT2}, the map from $D$ to $C$ ramifies precisely at points $x \in D$ such that $\Stab(x)$
intersects $S_{D}$ in a strictly larger group than $H$. Furthermore, the ramification degree is exactly $[\Stab(x)\cap S_{D}:H]$.\end{cor}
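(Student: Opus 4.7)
The plan is to reduce the corollary to the standard ramification formula for a finite group acting on a curve. The key preliminary observation is that the generic stabilizer $H$ actually stabilizes every point of $D$: since $H$ is a fixed finite subgroup of $G$, the fixed locus $D^{H}$ is closed in $D$, and by definition of ``generic stabilizer'' it contains a dense open subset of $D$, so $D^{H} = D$. Hence $S_{D}$ acts on $D$ through the quotient $S_{D}/H$, which acts generically freely, and by the remark following Proposition~\ref{GIT2} the induced map $D \to D/(S_{D}/H)$ is precisely $D \to C$; the degrees agree, because $|S_{D}/H| = mh/h = m$.

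With this reduction in place, I would invoke the standard fact that, for a finite group $\Gamma$ acting generically freely on a curve, the ramification index of the quotient map $D \to D/\Gamma$ at a point $x$ is the order of the stabilizer of $x$ in $\Gamma$. Taking $\Gamma = S_{D}/H$, the stabilizer of $x \in D$ in $\Gamma$ is $(\Stab(x)\cap S_{D})/H$, which has order $[\Stab(x)\cap S_{D}:H]$. This establishes the second assertion of the corollary. The first assertion follows immediately: the map ramifies at $x$ exactly when $e_{x} > 1$, i.e.\ when the index $[\Stab(x)\cap S_{D}:H]$ exceeds $1$, which is to say when $\Stab(x)\cap S_{D}$ strictly contains $H$.

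The only subtlety is the normality hypothesis, since the ramification formula is cleanest on smooth curves while Proposition~\ref{GIT2} only requires one of $D$, $C$ to be normal. If $D$ fails to be normal, I would pass to its normalization $\widetilde{D}$, on which $S_{D}$ still acts by functoriality, run the argument there, and then transfer back using the fact that the ramification index of a finite map between curves is preserved under normalizing source and target. This is the only real technicality; once the trivial-action-of-$H$ observation is in hand, the rest of the proof is a bookkeeping exercise with subgroup indices and presents no genuine obstacle.
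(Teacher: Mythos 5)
Your proposal is correct, and it is exactly the argument the paper has in mind: the paper states this corollary without a written proof, relying on the preceding remark that $D \to C$ is Galois with the generic stabilizer $H$ normal in $S_{D}$, and your identification of $D \to C$ with the quotient by $S_{D}/H$ acting generically freely (so that $e_{x} = \lvert(\Stab(x)\cap S_{D})/H\rvert = [\Stab(x)\cap S_{D}:H]$) is precisely the fleshing-out of that remark. The one step worth making explicit is why the generic stabilizer is a single subgroup $H$ fixing $D$ pointwise rather than a varying family of conjugates: any $g$ stabilizing a generic point fixes a dense closed subset of the irreducible curve $D$, hence all of it, which simultaneously gives $H \trianglelefteq S_{D}$ as the kernel of the $S_{D}$-action.
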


\medskip For high $n$ or $d$, the stabilized locus of $\Hom_{d}^{n}$ is of high codimension. Furthermore, most curves in $\Hom_{d}^{n, ss}$ lie in $\Hom_{d}^{n,
s}$. Therefore, generically not only is $H$ trivial, but also there are no points on $D$ with nontrivial stabilizer. Thus for most $C$ and $D$, the map $D \to C$
must be unramified. Thus, when $C$ is rational, generically the degree is $1$.

\medskip It's based on this observation that we conjecture the bounds for the nontrivial bundle case in both directions -- that is, that if we fix $C$ and the
bundle class $\mathbf{P}(\mathcal{E})$, then the degree of the map $\pi: D \to C$ is bounded.

\medskip Using the structure result on $\mathrm{M}_{2}^{ss} = \mathbb{P}^{2}$, we can prove much more:

\begin{prop}If $C$ is a generic line in $\mathrm{M}_{2}^{ss}$, then it requires a nontrivial bundle.\end{prop}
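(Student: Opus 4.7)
The plan is to argue by contradiction using Proposition~\ref{GIT2} and the corollary that follows it. Assume a generic line $C \subseteq \mathrm{M}_2^{ss} = \mathbb{P}^2$ lifts to a complete curve $D \subseteq \Hom_2^{1,ss}$ of some degree $m$ over $C$. Since $n=1, d=2$ is the equality case of Theorem~\ref{containment}, the stable and semistable loci coincide, so $D$ sits in the stable locus and Proposition~\ref{GIT2} produces a finite subgroup $S_D \subseteq \PGL(2)$ of order $mh$, where $h$ is the generic stabilizer size on $D$. By Milnor's classification of symmetry types, the automorphism locus in $\mathrm{M}_2^{ss}$ is a fixed $1$-dimensional closed subscheme of $\mathbb{P}^2$, so a generic line $C$ is disjoint from all but finitely many of its points; in particular $h=1$, and $C$ meets the automorphism locus transversely at a bounded number of points.

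Corollary~\ref{GITc} (applied in the restated form after Proposition~\ref{GIT2}) then confines the ramification of $D \to C$ to the preimages of these finitely many intersection points, with each ramification degree equal to the order of some finite subgroup of $\PGL(2)$, hence uniformly bounded. Riemann--Hurwitz for $D \to C \cong \mathbb{P}^1$ with $g(D) \geq 0$ now gives an explicit upper bound on $m$, leaving only finitely many admissible triples $(m, S_D, \text{branch pattern})$. For each such triple the lifts $D$ form a constructible moduli space $\mathcal{L}$ equipped with a natural morphism $\mathcal{L} \to (\mathbb{P}^2)^{*}$ sending a lift to the line it covers; the proposition reduces to showing that the image of each such morphism has dimension strictly less than $2$.

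The principal case is $m=1$: a lift is a rational map $s \colon \mathbb{P}^1 \to \Hom_2^{1,ss} \subseteq \mathbb{P}^5$ of some degree $e$ whose composition with $\pi$ has linear image. Writing $\pi$ through three $\SL(2)$-invariants $F_0, F_1, F_2$ of common weight $k$ on $\mathbb{P}^5$ (the multiplier invariants suitably homogenized), the condition that $(F_0 \circ s : F_1 \circ s : F_2 \circ s)$ parametrize a line in $\mathbb{P}^2$ is the determinantal linear-dependence condition on the three degree-$ek$ polynomials $F_i \circ s$, of codimension $ek-1$; I would verify that for all admissible $e$ this codimension dominates the $(6e+2)$-dimensional space of parametrizations modulo $\Aut(\mathbb{P}^1)$ to force the image in $(\mathbb{P}^2)^{*}$ to have dimension $<2$. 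The cases $m>1$ reduce to $m=1$ by descending along the quotient $D \to D/S_D \hookrightarrow \pi^{-1}(C)$, which still produces a degree-$1$ lift of $C$ by a rational (possibly reducible) curve.

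The hardest step is this $m=1$ dimension comparison, for two reasons: one has to pin down the common weight $k$ of the defining invariants (either by direct computation with the multiplier formulas or, more cleanly, by invoking the height function introduced in the remainder of the section, which converts the question into a numerical inequality), and one has to exclude lifts $s$ whose image escapes into the unstable boundary $\mathbb{P}^5 \setminus \Hom_2^{1,ss}$ at the few points of $C \setminus \pi(\Hom_2^{1,ss} \cap \text{image of }s)$. I would handle the latter via a specialization argument, using the openness of the semistable locus to reduce to generic behavior of $s$, so that failure of semistability at a boundary point of $C$ already gives the required codimension contribution.
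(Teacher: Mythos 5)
There is a genuine gap. Your argument correctly reproduces one half of the paper's proof: using Proposition~\ref{GIT2}, the triviality of the generic stabilizer, the finitely many points where $C$ meets the automorphism locus, and Riemann--Hurwitz, you get an upper bound on the degree $m$ of $D \to C$. (The paper makes this concrete: the stabilized locus in $\mathbb{P}^{2}$ is a cuspidal cubic with stabilizer $\mathbb{Z}/2\mathbb{Z}$ away from the cusp, a generic line meets it in three non-cusp points, so there are at most three ramification points of degree $2$ and Riemann--Hurwitz forces $m \leq 2$.) What you are missing is the complementary \emph{lower} bound, which is where the actual contradiction comes from. The resultant $\Res_{2}$ is an $\SL(2)$-invariant hypersurface of degree $(n+1)d^{n} = 4$ in $\mathbb{P}^{5}$ whose image downstairs is a line in $\mathbb{P}^{2} = \mathrm{M}_{2}^{ss}$; since it is invariant, intersection numbers satisfy $D.\Res_{2} = m\cdot C.\Res_{2}$, i.e.\ $4\cdot D.\mathcal{O}(1) = m\cdot 1$, so $m$ is a positive multiple of $4$. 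Combined with $m \leq 2$ this is an immediate contradiction, with no case analysis on $m$ needed.

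In place of this, you propose to rule out each admissible $m$ (starting with $m=1$, which the resultant argument already shows is impossible) by a dimension count on the space of degree-$e$ rational curves in $\mathbb{P}^{5}$ whose images under $\pi$ are lines. That step is not carried out: the comparison between the codimension $ek-1$ of the linear-dependence locus and the $(6e+2)$-dimensional parameter space is only sketched, the weight $k$ is left undetermined, and the reduction of $m>1$ to $m=1$ via $D \to D/S_{D}$ is asserted rather than proved (the quotient need not embed in $\pi^{-1}(C)$ as a curve of the required type). Even if completable, this route would show only that the \emph{generic} line has no lift of each fixed numerical type, and you would still need to control the union over all types; the paper's intersection-theoretic argument avoids all of this. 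The fix is to replace your entire $m=1$ analysis with the single observation about the pullback of the resultant divisor.
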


\begin{proof}Generically, $C$ is not the line consisting of the resultant locus, $\mathrm{M}_{2}^{ss}\setminus\mathrm{M}_{2}$. So it intersects this line at exactly
one point. Furthermore, since the resultant $\Res_{2}$ is an $\SL(2)$-invariant section, we have $D.\Res_{2} = m\cdot C.\Res_{2}$; we abuse notation and use
$\Res_{d}^{n}$ to refer to the resultant divisor both upstairs and downstairs. Since the degree of the resultant upstairs is $(n+1)d^{n} = 4$~\cite{Jou}, we obtain
$4\cdot D.\mathcal{O}(1) = m$. In other words, $m \geq 4$.

\medskip However, using Proposition~\ref{GIT2}, we will show $m \leq 2$ generically. The generic stabilizer is trivial, and the stabilized locus is a cuspidal
cubic in $\mathbb{P}^{2}$, on which the stabilizer is isomorphic to $\mathbb{Z}/2\mathbb{Z}$, except at the cusp, where it is $S_{3}$. The generic line $C$ will
intersect this cuspidal curve at three points, none of which is the cusp. Therefore, $h = 1$, and there are at most three points of ramification, with ramification
degree $2$. By Riemann-Hurwitz, the maximum $m$ is $2$, contradicting $m \geq 4$.\end{proof}

\bibliographystyle{amsplain}
\bibliography{semistable_reduction_v2}

\bigskip\noindent \sc{Alon Levy, Department of Mathematics, Columbia University, New York, NY 10027, USA}

\noindent \tt{email: levy@math.columbia.edu}

\end{document}